\definecolor{halfgray}{gray}{0.55} 
\definecolor{webgreen}{rgb}{0,0.5,0}
\definecolor{webbrown}{rgb}{.6,0,0} \hypersetup{%
\newtheorem{theorem}{Theorem}[section]
\newtheorem{lemma}[theorem]{Lemma}
\newtheorem{corollary}[theorem]{Corollary}
\newtheorem{proposition}[theorem]{Proposition}
\theoremstyle{definition} 
\newtheorem{definition}[theorem]{Definition}
\newtheorem{remark}[theorem]{Remark}
\def\R{\mathbb{R}}
\def\N{\mathbb{N}}
\def\Z{\mathbb{Z}}
\newcommand{\norm}[1]{{\left\lVert \, #1 \, \right\rVert}}
\def\Id{\text{Id}}
\def\Aut{\text{Aut}}
\begin{document}

\title
[Cohomology of twisted cocycles]
{Cohomology of fiber-bunched twisted cocycles over hyperbolic systems}

\author{Lucas Backes}

\address{Departamento de Matem\'atica, Universidade Federal do Rio Grande do Sul, Av. Bento Gon\c{c}alves 9500, CEP 91509-900, Porto Alegre, RS, Brazil.}

\email{lucas.backes@ufrgs.br }


\keywords{Twisted cocycles, Cohomology, Hyperbolic Systems, Periodic points}
\subjclass[2010]{Primary: 37H05, 37A20; Secondary: 37D20}

\begin{abstract}
A twisted cocycle taking values on a Lie Group $G$ is a cocycle that, in each step, is twisted by an automorphism of $G$. In the case when $G=GL(d,\mathbb{R})$, we prove that if two H\"older continuous twisted cocycles satisfying the so called fiber-bunching condition have the same periodic data then they are cohomologous.   
\end{abstract}

\maketitle

\section{Introduction}

Given a homeomorphism $f:M\to M$ acting on compact metric space $(M,d)$ and an automorhism $\alpha\in \Aut(G)$ of a topological group $G$, we say that the map $A_\alpha:\Z\times M\to G$ is an \emph{$\alpha$-twisted cocycle} over $f$ if
\begin{equation}\label{eq: twisted cocycle eq intro}
A_\alpha^{m+n}(x)=A_\alpha^n(f^m(x)) \alpha^n( A_\alpha^m(x))
\end{equation}
for all $x\in M$ and $m,n \in \Z$. 

Two $\alpha$-twisted cocycles $A_\alpha$ and $B_\alpha$ over $f$ are said to be \emph{$\alpha$-cohomologous} whenever there exists a \emph{transfer map} map $P:M\to G$ satisfying 
\begin{equation*}
A^n_\alpha (x)=P(f^n(x))B^n_\alpha(x)\alpha^n(P(x))^{-1}
\end{equation*}
for every $x\in M$ and $n\in \Z$. Observe that in the case when $\alpha=\Id$ the notions of $\alpha$-twisted cocycle and $\alpha$-cohomology coincide with the ``standard" notions of cocycles and cohomology in Dynamical Systems \cite{KN11}.

Cohomology of twisted cocycles appears naturally in many problems in Dynamics. For instance, any map $A:M\to G$ naturally generates an $\alpha$-twisted cocycle $A_\alpha$ over $f$ (see Section \ref{sec: twisted}). In this case, we can consider the \emph{twisted skew-product} $F_{A,\alpha}:M\times G\to M\times G$ given by $F_{A,\alpha}(x,g)=(f(x),A(x)\alpha(g))$. Now, the problem of determining whether two twisted skew-products $F_{A,\alpha}$ and $F_{B,\alpha}$ are conjugated reduces to the problem of studying whether $A_\alpha$ and $B_\alpha$ are $\alpha$-cohomologous. In fact, the map $U(x,g)=(x,P(x)g)$ conjugates $F_{A,\alpha}$ and $F_{B,\alpha}$ precisely when $P$ is a transfer map for $A_\alpha$ and $B_\alpha$. This observation applied to the case when $G=GL(d,\R)$ is what motivates much of this note. Other applications also appear in the study of regularity of the transfer map for non-abelian cocycles over Anosov actions \cite{NT98}, in applications to the differentiable rigidity of Anosov diffeomorphisms \cite{dlL87} and the study of local rigidity of higher rank abelian partially hyperbolic actions \cite{DK10}. For more applications we refer to Section 4.6 of \cite{KN11} and to \cite{Kon95}.

In the present paper we are interested in describing necessary and sufficient conditions under which two $\alpha$-twisted cocycles $A_\alpha$ and $B_\alpha$ are $\alpha$-cohomologous whenever $f$ is a hyperbolic map. In the case when $\alpha=\Id$ and $G$ is an abelian group admiting a bi-invariant metric, a first criterion was given by Liv\v{s}ic in his seminal papers \cite{Liv71} and \cite{Liv72}. More precisely, he proved that $A_\Id$ and $B_\Id$ are $\Id$-cohomologous if and only if 
\begin{displaymath}
A^n_\Id(p)=B^n_\Id(p) \text{ for every } p\in \text{Fix}(f^n).
\end{displaymath}

Because of its many applications, still in the case when $\alpha=\Id$, this criterion was extended by many authors to many different settings usually eliminating the assumptions that $G$ is abelian and admits a bi-invariant metric. See for instance \cite{AKL18,Bac15,BK16,Kal11,Par99,Sad15,Sch99}.

The case when $\alpha$ is not the identity, on the other hand, despite of its many applications, has received much less attention. To the best of the author's knowledge, the best result in this setting is a theorem by Walkden \cite{Wal00} where he got an analogous result to the original Liv\v{s}ic's theorem under the assumptions that $G$ is a connected Lie group admitting a bi-invariant metric\footnote{In the case when $G=GL(d,\mathbb{R})$ the existence of the bi-invariant metric can be replaced by a \emph{bounded distortion condition}. See comments after Theorem \ref{teo: main}} and the automorphism $\alpha$ satisfies some ``growth" conditions. The objective of this paper is to extend the results of \cite{Wal00} to the case when $G=GL(d,\mathbb{R})$.

\subsection{Main results} \label{sec: main results}

The main result of this work is the following one (see Section \ref{sec: prelimin} for precise definitions):

\begin{theorem} \label{teo: main} 
Let $f: M\to M$ be a Lipschitz continuous transitive hyperbolic homeomorphism on a compact metric space
$(M,d)$, $A,B:M\to GL(d,\mathbb{R})$ two $\nu$-H\"older continuous maps and $\alpha \in \Aut(GL(d,\mathbb{R}))$ be an automorphism of $GL(d,\mathbb{R})$. Suppose that the twisted cocycles $A_\alpha$ and $B_\alpha$ are fiber-bunched. Moreover, suppose that they satisfy the periodic orbit condition  \begin{equation} \label{eq: periodic orbit cond main teo}
A^n_\alpha(p)=B^n_\alpha(p), \quad\forall n\in \Z,\ \forall p\in \text{Fix}(f^n). 
\end{equation}
Then, there exists a $\nu$-H\"older continuous map $P:M\to GL(d,\mathbb{R})$ such that
\begin{equation}\label{eq: cohomo eq main teo}
A^n_\alpha(x)=P\big(f^n(x)\big) B_\alpha^n (x)\alpha^n\left(P(x)\right)^{-1}, \quad\forall x\in M,\ \forall n\in\Z.
\end{equation}
\end{theorem}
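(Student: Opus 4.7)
The strategy is a twisted analogue of Kalinin's fiber-bunched Liv\v{s}ic theorem \cite{Kal11}, in the spirit of Walkden's bi-invariant version \cite{Wal00} but using holonomies in place of a bi-invariant distance as the main geometric tool. I would first construct stable and unstable holonomies adapted to the twisted cocycles, then combine them with the Anosov Closing Lemma to produce a transfer map on a dense orbit, and finally extend by continuity.

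First I would construct, for each of $A_\alpha$ and $B_\alpha$, \emph{twisted} stable holonomies on local stable manifolds of $f$ via
\[
H^{s,A_\alpha}_{x,y} \;=\; \lim_{n\to\infty}\alpha^{-n}\!\bigl(A_\alpha^n(y)^{-1}A_\alpha^n(x)\bigr),
\]
and analogously on unstable manifolds. The fiber-bunching hypothesis is designed precisely so that successive increments of this product decay geometrically, yielding existence, $\nu$-H\"older continuity in $(x,y)$, and a twisted equivariance of the form
\[
H^{s,A_\alpha}_{f(x),f(y)}\,A_\alpha(x) \;=\; A_\alpha(y)\,\alpha\!\bigl(H^{s,A_\alpha}_{x,y}\bigr),
\]
derived directly from \eqref{eq: twisted cocycle eq intro}. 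The same construction produces the holonomies $H^{s,B_\alpha}$ and $H^{u,B_\alpha}$.

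Next I would fix a transitive point $z\in M$, set $P(z)=\Id$, and propagate $P$ along the forward orbit of $z$ by the cohomology rule $P(f^n(z)) = A_\alpha^n(z)\,\alpha^n(P(z))\,B_\alpha^n(z)^{-1}$. To show that this map extends H\"older continuously to $M$, given two close points $x,y$ in the orbit of $z$ I would apply the Anosov Closing Lemma to find a periodic point $p\in\text{Fix}(f^k)$ whose orbit shadows the orbit segment joining $x$ and $y$, and then insert stable and unstable holonomies of both cocycles to rewrite $P(y)\,P(x)^{-1}$ as a product of small holonomy factors enclosing a term of the form $A_\alpha^k(p)\,B_\alpha^k(p)^{-1}$. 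The periodic orbit hypothesis \eqref{eq: periodic orbit cond main teo} collapses this middle factor to the identity, and the outer factors are controlled in operator norm by the $\nu$-H\"older continuity of the holonomies together with the shadowing estimate. Density of $\{f^n(z)\}_{n\in\Z}$ then produces a $\nu$-H\"older extension $P:M\to GL(d,\R)$, and the identity \eqref{eq: cohomo eq main teo} extends from the orbit of $z$ to all of $M$ by continuity.

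The main obstacle is the book-keeping in the closing-lemma step: each application of the cocycle equation inserts an extra $\alpha^n$, and these twists do not commute past one another, so the comparison of $A_\alpha^k$ along the true orbit with $A_\alpha^k$ along the periodic shadowing orbit must be carried out term by term, conjugating the periodic-orbit data by the appropriate powers of $\alpha$ before the hypothesis can be invoked. This is also where the assumptions enter nontrivially: the fiber-bunching condition is imposed on the \emph{twisted} cocycles $A_\alpha$ and $B_\alpha$ rather than on the underlying maps $A$ and $B$, and this is precisely what ensures that the holonomy errors produced by the shadowing estimate still dominate whatever growth is introduced by iterating $\alpha$.
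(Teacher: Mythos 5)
Your construction of the twisted holonomies is essentially identical to the paper's Proposition \ref{prop: holonomies existence} (same limit, same twisted equivariance, with fiber-bunching supplying the geometric decay), so that part is sound. The transfer-map step, however, takes a genuinely different route --- propagation of $P$ along the orbit of a transitive point, as in Kalinin-style proofs of the Liv\v{s}ic theorem --- and this is where there is a real gap. Write $w=f^n(z)$ and $k=m-n$, so that $P(f^m(z))=A_\alpha^k(w)\,\alpha^k(P(w))\,B_\alpha^k(w)^{-1}$. Inserting holonomies along the shadowing periodic orbit $p\in\text{Fix}(f^k)$ gives $A_\alpha^k(w)=E_A\,A_\alpha^k(p)\,\alpha^k(F_A)$ and $B_\alpha^k(w)=E_B\,B_\alpha^k(p)\,\alpha^k(F_B)$ with $E_A,E_B,F_A,F_B$ within $Cd(f^k(w),w)^\nu$ of $\Id$, whence, using $A_\alpha^k(p)=B_\alpha^k(p)$,
\begin{displaymath}
P(f^m(z))=E_A\,A_\alpha^k(p)\,\alpha^k\bigl(F_A\,P(w)\,F_B^{-1}\bigr)\,A_\alpha^k(p)^{-1}\,E_B^{-1}.
\end{displaymath}
The periodic hypothesis does \emph{not} collapse the middle factor to the identity: it only turns it into a conjugation (composed with $\alpha^k$) by $A_\alpha^k(p)$ of the matrix $F_A P(w)F_B^{-1}$, which is bounded but not close to $\Id$. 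There is no reason for $A_\alpha^k(p)\,\alpha^k(P(w))\,A_\alpha^k(p)^{-1}$ to be close to $P(w)$ --- that is essentially the statement being proved --- and fiber-bunching only bounds the conjugation distortion by $Ce^{\theta k}$, which helps only when the conjugated matrix is already exponentially close to $\Id$. The same obstruction is present already for $\alpha=\Id$; it is why the two-cocycle conjugacy theorems of Parry, Schmidt, Sadovskaya and Backes do not use orbit propagation.

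The paper circumvents this by basing the construction at a \emph{fixed} point $x$ of $f$ (removing that assumption at the very end) and defining $P(y)=H^{s,A}_{xy}H^{s,B}_{yx}$ on the set $W(x)=W^s(x)\cap W^u(x)$ of homoclinic points. There the periodic data is used at $x$ itself, where $A_\alpha^n(x)=B_\alpha^n(x)$ makes the cohomological identity for $P$ exact with no error term, and the Anosov Closing Lemma is used only to prove Lemma \ref{lemma: P su holon}, namely that the same $P$ is obtained from unstable holonomies. In that comparison the quantities $\alpha^{\mp n}\bigl(A_\alpha^{\pm n}(\cdot)^{-1}B_\alpha^{\pm n}(\cdot)\bigr)$ evaluated at $y$ and at the closing periodic point $p_n$ carry no extraneous bounded matrix in the middle, so the $e^{\theta n}$ conjugation loss is beaten by the $e^{-\lambda\nu n}$ shadowing gain; H\"older continuity of $P$ then follows from the local product structure, not from the closing lemma. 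To repair your argument you would need to replace orbit propagation by this (or an equivalent) homoclinic construction.
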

This result consists of a generalization of the main results of \cite{Bac15} and \cite{Sad15} to the case of twisted cocycles. In fact, the main result of those works can be obtained as corollaries of the previous one by taking $\alpha=\Id$. Moreover, this result also generalizes the main result of \cite{Wal00} in the case when $G=GL(d,\mathbb{R})$. Indeed, it was observed in \cite[Remark 3.4]{Wal00} that in such case, instead asking for the group to admit a bi-invariant metric (recall that $GL(d,\mathbb{R})$ does not admit such a metric), one can assume some \emph{bounded distortion condition} in the twisted cocycles. Roughly speaking, this condition asks for each of the terms in the left-hand side of \eqref{eq: fb} to be uniformly bounded. In particular, such condition is much more restrictive than our fiber-bunching assumption.

One can easily see that the $\alpha$-cohomology relation is an equivalence one over the space of $\alpha$-twisted cocycles. In particular, as a simple consequence of the previous result one can get a complete characterization of the cohomology classes in the twisted scenario in terms of the periodic data:
\begin{corollary}\label{cor: equivalence}
Let $f$, $A$, $B$ and $\alpha$ be as in Theorem \ref{teo: main} and, moreover, suppose that $A$ or $B$ satisfies \eqref{eq: fb}, \eqref{eq: alpha is Lips} and \eqref{eq: growth alpha} with $7\rho +2\theta <\nu \lambda$. Then, there exists a $\nu$-H\"older continuous map $Q:M\to GL(d,\mathbb{R})$ such that 
$$A^n_\alpha(p)=Q(p)B^n_\alpha(p)\alpha^n\left(Q(p)\right)^{-1}$$
for every $n\in \Z$ and $p\in \text{Fix}(f^n)$ if and only if there exists a $\nu$-H\"older continuous map $P:M\to GL(d,\mathbb{R})$ such that
\begin{equation*}
A^n_\alpha(x)=P\big(f^n(x)\big) B_\alpha^n (x)\alpha^n\left(P(x)\right)^{-1}, \quad\forall x\in M,\ \forall n\in\Z.
\end{equation*}
\end{corollary}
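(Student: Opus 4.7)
One direction is immediate: if $P$ is a transfer map satisfying the cohomology equation, then evaluating at a point $p\in\text{Fix}(f^n)$ and using $f^n(p)=p$ gives $A^n_\alpha(p)=P(p)B^n_\alpha(p)\alpha^n(P(p))^{-1}$, so $Q:=P$ works. The substantive content is the converse.

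My strategy for the converse is to replace $B_\alpha$ by a cohomologous cocycle whose periodic data agrees \emph{pointwise} with that of $A_\alpha$ and then invoke Theorem \ref{teo: main}. Define $\tilde{B}:M\to GL(d,\mathbb{R})$ by $\tilde{B}(x):=Q(f(x))\,B(x)\,\alpha(Q(x))^{-1}$, and let $\tilde{B}_\alpha$ be the associated $\alpha$-twisted cocycle. A direct induction from the identity \eqref{eq: twisted cocycle eq intro} gives
\[
\tilde{B}^n_\alpha(x)=Q(f^n(x))\,B^n_\alpha(x)\,\alpha^n(Q(x))^{-1}\quad\text{for all }x\in M,\ n\in\Z.
\]
For $p\in\text{Fix}(f^n)$ we have $f^n(p)=p$, and the hypothesis on $Q$ becomes exactly $\tilde{B}^n_\alpha(p)=A^n_\alpha(p)$. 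Thus $\tilde{B}_\alpha$ and $A_\alpha$ have identical periodic data.

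I would then check that $\tilde{B}_\alpha$ fits the hypotheses of Theorem \ref{teo: main}: the map $\tilde{B}$ is $\nu$-H\"older because $Q$ and $B$ are $\nu$-H\"older, $f$ is Lipschitz, $\alpha$ is Lipschitz by \eqref{eq: alpha is Lips}, and multiplication/inversion in $GL(d,\mathbb{R})$ preserve H\"older regularity on compact sets; and $\tilde{B}_\alpha$ inherits fiber-bunching from the side (A or B) that is assumed to satisfy \eqref{eq: fb}, \eqref{eq: alpha is Lips}, \eqref{eq: growth alpha}. Granted this, Theorem \ref{teo: main} applied to the pair $(A_\alpha,\tilde{B}_\alpha)$ yields a $\nu$-H\"older map $P_0:M\to GL(d,\mathbb{R})$ with $A^n_\alpha(x)=P_0(f^n(x))\tilde{B}^n_\alpha(x)\alpha^n(P_0(x))^{-1}$. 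Substituting the formula for $\tilde{B}^n_\alpha$ and setting $P(x):=P_0(x)Q(x)$ produces
\[
A^n_\alpha(x)=P(f^n(x))\,B^n_\alpha(x)\,\alpha^n(P(x))^{-1},
\]
where $P$ is $\nu$-H\"older as a product of two $\nu$-H\"older $GL(d,\mathbb{R})$-valued maps.

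The main obstacle is showing that $\tilde{B}_\alpha$ is fiber-bunched. Although $Q^{\pm 1}$ is bounded on the compact space $M$, the factor $\alpha^n(Q(x))$ can grow with $n$ at the rate controlled by \eqref{eq: growth alpha}, and this extra growth must be absorbed inside the fiber-bunching budget. The stronger inequality $7\rho+2\theta<\nu\lambda$ in the corollary's hypothesis is exactly the margin needed to ensure that the single fiber-bunching assumption on $A$ or $B$ propagates to the modified cocycle $\tilde{B}_\alpha$ (and simultaneously covers both cocycles, as Theorem \ref{teo: main} requires), which is the one nontrivial verification needed to close the argument.
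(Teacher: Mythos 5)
Your proposal is correct and follows essentially the same route as the paper: conjugate $B_\alpha$ by $Q$ to form $\tilde{B}^n_\alpha(x)=Q(f^n(x))B^n_\alpha(x)\alpha^n(Q(x))^{-1}$, check it is an $\alpha$-twisted cocycle with the same periodic data as $A_\alpha$, note that the strengthened condition $7\rho+2\theta<\nu\lambda$ is what keeps $\tilde{B}_\alpha$ fiber-bunched, and apply Theorem \ref{teo: main}. Your explicit recovery of the transfer map as $P=P_0Q$ is a small but welcome addition that the paper leaves implicit.
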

\begin{proof}
One implication is trivial. Let us deduce the other one. Assume that $B$ satisfies \eqref{eq: fb}, \eqref{eq: alpha is Lips} and \eqref{eq: growth alpha} with $7\rho +2\theta <\nu \lambda$. The case when $A$ satisfies it is similar. Let us consider 
\begin{displaymath}
\tilde{B}^n_\alpha(x)=Q(f^n(x))B^n_\alpha(x)\alpha^n(Q(x))^{-1}.
\end{displaymath}
We start observing that $\left(\tilde{B}^n_\alpha\right)_{n\in \Z}$ is an $\alpha$-twisted cocycle over $f$. Indeed,
\begin{displaymath}
\begin{split}
\tilde{B}_\alpha^{n+m}(x)&=Q(f^{n+m}(x))B_\alpha^{n+m}(x)\alpha^{m+n}(Q(x))^{-1}\\
&=Q(f^{n+m}(x))B_\alpha^{n}(f^m(x))\alpha^n\left( B_\alpha^m(x)\right)\alpha^{m+n}(Q(x))^{-1} \\
&=Q(f^{n+m}(x))B_\alpha^{n}(f^m(x))\alpha^n\left(Q(f^m(x))^{-1}Q(f^m(x))\right)\alpha^n\left( B_\alpha^m(x)\right)\alpha^{m+n}(Q(x))^{-1}\\
&=Q(f^{n+m}(x))B_\alpha^{n}(f^m(x))\alpha^n\left(Q(f^m(x)\right)^{-1} \alpha^n\left(Q(f^m(x)) B_\alpha^m(x)\alpha^{m}(Q(x))^{-1}\right)\\
&=\tilde{B}_\alpha^{n}(f^m(x))\alpha^n\left(\tilde{B}_\alpha^{m}(x)\right).
\end{split}
\end{displaymath}
Moreover, our hypothesis on $B$ ensures that $\tilde{B}$ is fiber-bunched in the sense of Section \ref{sec: FB}. Thus, since $A^n_\alpha(p)=\tilde{B}^n_\alpha(p)$ for every $p\in \text{Fix}(f^n)$ the result follows by applying our main result to these two cocycles. 
\end{proof}
Observe that the previous proof gives us no apparent ``meaningful" relation between the maps $P$ and $Q$ given in the statement of Corollary \ref{cor: equivalence}.

In order to proof our main result we follow the approaches of \cite{Bac15}, which in turn was inspired by \cite{Par99, Sch99}, and \cite{Sad15,Wal00}. The main idea consists in constructing \emph{invariant holonomies}, which is a family of linear maps with good properties (see Proposition \ref{prop: holonomies existence}), and then, using this family, to explicitly construct the transfer map on a dense set under the additional assumption that $f$ admits a fixed point. The next step consists in showing that, restricted to this dense set, the transfer map is $\nu$-H\"older continuous and then extending it to the whole space. Finally, we explain how to eliminate the hypothesis of existence of a fixed point for $f$. The main difference from this proof and the one in \cite{Bac15} is that the estimates here are much more involved due to the presence of twisting. The overall strategy is the same. In particular, the last step of the proof is the same, mutatis mutandis, as in the untwisted case and so we only indicate how to proceed.

Throughout the paper we are going to use the letter $C$ as a generic notation for a positive constant that may change from line to line. Whenever necessary, we will explicitly mention the parameters on which $C$ depends.

\section{Preliminaries}\label{sec: prelimin}

Let $(M,d)$ be a compact metric space, $f: M \to M $ a homeomorphism, $G$ a Lie group and $A:M\to G$ a $\nu$-H\"{o}lder continuous map. 

\subsection{Hyperbolic homeomorphisms} \label{sec: hyperbolic homeo}

Given any $x\in M$ and $\varepsilon >0$, define the \emph{local stable} and \emph{unstable sets} of $x$ with respect to $f$ by
\begin{align*}
  W^s_\varepsilon(x) &:= \left\{y\in M : d(f^n(x),f^n(y))\leq\varepsilon,\ \forall
    n \geq 0\right\}, \\
  W^u_\varepsilon(x) &:= \left\{y\in M : d(f^n(x),f^n(y))\leq\varepsilon,\ \forall
    n \leq 0\right\},
\end{align*}
respectively. Following \cite{AV10}, we introduce the following

\begin{definition}
  \label{def: hyperbolic homeo}
  A homeomorphism $f:M\to M$ is said to be \emph{hyperbolic with local product structure} (or just \emph{hyperbolic} for short) whenever there exist constants $C,\varepsilon,\lambda,\tau>0$ such that the following conditions are satisfied:
  \begin{itemize}
  \item[$\circ$] $d(f^n(y_1),f^n(y_2)) \leq Ce^{-\lambda n} d(y_1,y_2)$, $\forall x\in M$, $\forall y_1,y_2 \in W^s_\varepsilon (x)$, $\forall
    n\geq 0$;
  \item[$\circ$] $d(f^{-n}(y_1), f^{-n}(y_2)) \leq Ce^{-\lambda n} d(y_1,y_2)$, $\forall x\in M$, $\forall y_1,y_2 \in W^u_\varepsilon (x)$, $\forall n\geq 0$;
  \item[$\circ$] If $d(x,y)\leq\tau$, then $W^s_\varepsilon(x)$ and $W^u_\varepsilon(y)$ intersect in a unique point which is denoted by $[x,y]$ and depends continuously on $x$ and $y$.
  \end{itemize}
\end{definition}

For such homeomorphisms, one can define the \emph{stable} and \emph{unstable sets} by
\begin{displaymath}
  W^s(x):= \bigcup_{n\geq 0} f^{-n}\big(W^s_\varepsilon(f^n(x))\big) \quad\text{and}\quad W^u(x):= \bigcup_{n\geq 0} f^{n}\big(W^u_\varepsilon(f^{-n}(x))\big),
\end{displaymath}
respectively.

Notice that subshifts of finite type and basic sets of Axiom A diffeomorphisms are particular examples of hyperbolic homeomorphisms with local product structure (see for instance \cite[Chapter IV,\S~9]{Man87} for details).

\subsection{Twisted Cocycles}\label{sec: twisted}
Let $\Aut(G)$ denote the group of automorphisms of $G$ and $\alpha\in \Aut(G)$. A map $A_\alpha:\Z\times M\to G$ is said to be an \emph{$\alpha$-twisted cocycle} over $f$ if
\begin{equation*}
A_\alpha^{m+n}(x)=A_\alpha^n(f^m(x)) \alpha^n( A_\alpha^m(x))
\end{equation*}
for all $x\in M$ and $m,n \in \Z$. To any map $A:M\to G$ we may associate an $\alpha$-twisted cocycle over $f$ by
\begin{equation*}
A_\alpha^n(x)=
\left\{
	\begin{array}{ll}
		A(f^{n-1}(x))\alpha(A(f^{n-2}(x)))\ldots \alpha^{n-2}(A(f(x)))\alpha^{n-1}( A(x))  & \mbox{if } n>0 \\
		\Id & \mbox{if } n=0 \\
		\alpha^{n} ( A_\alpha^{-n} (f^{n}(x))^{-1}) & \mbox{if } n<0 \\
	\end{array}
\right.
\end{equation*}
for all $x\in M$. In this case we say that $A$ \emph{generates} the $\alpha$-twisted cocycle $A_\alpha$ over $f$. Reciprocally, every $\alpha$-twisted cocycle $A_\alpha$ is generated by $A=A^1_\alpha$. In what follows, for sake of simplicity, we write just $A_\alpha$ instead of $A^1_\alpha$.

\subsection{Cohomology of $\alpha$-twisted cocycles} Given a $\nu$-H\"{o}lder continuous map $B:M\to G$, we say that the $\alpha$-twisted cocycles $A_\alpha$ and $B_\alpha$ generated by $A$ and $B$ over $f$, respectively, are \emph{$\alpha$-cohomologous} if there exists a $\nu$-H\"{o}lder continuous map $P:M\to G$ such that
\begin{equation*}
A_\alpha (x)=P(f(x))B_\alpha(x)\alpha(P(x))^{-1}
\end{equation*}
for every $x\in M$. It is easy to verify that this equation is equivalent to 
\begin{equation*}
A^n_\alpha (x)=P(f^n(x))B^n_\alpha(x)\alpha^n(P(x))^{-1}
\end{equation*}
for every $x\in M$ and $n\in \Z$. As already observed in the introduction, whenever $\alpha=\Id$ we recover the usual notions of cocycles and cohomology \cite{Bac15,Sad15}.

\subsection{Linear $\alpha$-twisted cocycles} From now on we restrict ourselves to the case when $G=GL(d,\R)$. In particular, by $A:M\to GL(d,\mathbb{R})$ being $\nu$-H\"older continuous we mean that there exists a constant $C>0$ such that
\begin{equation}\label{eq: Holder}
\norm{A(x)-A(y)} \leq C d(x,y)^{\nu}
\end{equation}
for all $x,y\in M$ where $\norm{A}$ denotes the operator norm of a matrix $A$, that is, $\norm{A} =\sup \lbrace \norm{Av}/\norm{v};\; \norm{v}\neq 0 \rbrace$.

Observe that examples of automorphisms of $GL(d,\R)$ are $\alpha_L:GL(d,\R)\to GL(d,\R)$ and $\alpha_i:GL(d,\R)\to GL(d,\R)$ given by
$$\alpha_L(A)=LAL^{-1} \text{ and } \alpha_i(A)=\left(A^T\right)^{-1}$$
where $L\in GL(d,\R)$ is a fixed matrix and $A^T$ denotes the transpose of $A$. For more on $\Aut(GL(d,\R))$ we refer to \cite{Mcd78}.

\subsection{Fiber-bunched $\alpha$-twisted cocycles} \label{sec: FB} 
We say that the $\alpha$-twisted cocycle $A_\alpha$ generated by $A$ over $f$ is \emph{fiber-bunched} if there are constants $C>0$ and $\rho,\theta >0$ with $5\rho +2\theta <\nu \lambda$, where $\nu$ and $\lambda$ are as in \eqref{eq: Holder} and Definition \ref{def: hyperbolic homeo}, respectively, such that for every $n\in \Z$, 
\begin{itemize}
\item[i)]
\begin{equation}\label{eq: fb}
\|\alpha^{-n}(A^n_\alpha(x))\| \|\alpha^{-n}(A^n_\alpha(x)^{-1})\| < Ce^{\theta |n|}
\end{equation}
for every $x\in M$;
\item[ii)]
\begin{equation}\label{eq: alpha is Lips}
\| \alpha^n (T_1)- \alpha^n(T_2)\|\leq Ce^{\rho |n|} \|T_1-T_2\| 
\end{equation}
for every $T_1,T_2\in GL(d,\R)$;
\item[iii)]
\begin{equation}\label{eq: growth alpha}
\|\alpha^n(T)\| \leq Ce^{\rho |n|} \|T\|
\end{equation}
for every $T\in GL(d,\R)$.
\end{itemize}

Once again, it is easy to see that by taking $\alpha=\Id$ we recover the ``standard" notion of fiber-bunched cocycles used for instance in \cite{AV10,Bac15,BGV03,Sad15}. 

Observe that if $A$ and $\alpha$ are sufficiently close to the identity then the fiber-bunching condition is automatically satisfied. Other examples of $\alpha$-twisted cocycles with $\alpha\neq \Id$ satisfying the fiber-bunching condition are given, for instance, by taking $\alpha=\alpha_L$ as in the previous subsection with $L$ close enough to $\Id$ and assuming the cocycle $(A,f)$ is fiber-bunched in the standard sense of \cite{BGV03,Via08}. It is also worth noticing that this fiber-bunching notion is related to the partial hyperbolicity of the map $F_{A,\alpha}:M\times G\to M\times G$ given by $F_{A,\alpha}(x,g)=(f(x),A(x)\alpha(g))$. Indeed, condition \eqref{eq: fb} says that the rates of expansion and contraction given by $F_{A,\alpha}$ along the $G$-direction are ``dominated" by the rates of expansion and contraction along the $M$-direction.

\section{Invariant Holonomies}
In this section we introduce the notion of \emph{invariant holonomies} for twisted cocycles. This is done by generalizing the notion introduced by \cite{BGV03,Via08} in the untwisted case. As in the untwisted scenario, these objects are fundamental in our proof.

\begin{proposition}
  \label{prop: holonomies existence}
  Let $f: M\to M$ be a hyperbolic homeomorphism on a compact metric space $(M,d)$, $A: M\to GL(d,\mathbb{R})$ be a $\nu$-H\"older map and $\alpha\in \Aut(G)$. Suppose that the twisted cocycle $A_\alpha$ generated by $A$ and $\alpha$ over $f$ is fiber-bunched. Then there exists a constant $C=C(A,\alpha,f)>0$ such that, for any $x\in M$ and any $y, z\in W^s(x)$ the limit
  \begin{displaymath}
    H^{s,A,\alpha}_{yz}:= \lim_{n\to+\infty} \alpha^{-n}\left( A_\alpha^{n}(z)^{-1} A_\alpha^{n}(y) \right)
  \end{displaymath}
  exists and
  \begin{equation}\label{eq: holonom are holder}
  \|H^{s,A,\alpha}_{yz}- \Id \| \leq C d(y,z)^\nu,
  \end{equation}
  whenever $y,z\in W^s_\varepsilon(x)$, where the constant $\varepsilon>0$ associated to $f$ is given by Definition~\ref{def: hyperbolic homeo}.

  On the other hand, if $y, z\in W^u(x)$, we can analogously define
  \begin{displaymath}
    H^{u,A,\alpha}_{yz}:=\lim _{n\rightarrow +\infty} \alpha^n\left( A_\alpha^{-n}(z)^{-1}  A_\alpha^{-n}(y)\right), 
  \end{displaymath}
  and the very same H\"older estimates holds for these maps when $y,z\in W^u_\varepsilon(x)$.

  Finally, for every $x\in M$ and $*\in\{s,u\}$, it holds
  \begin{displaymath}
    H^{*,A,\alpha}_{yz}=H^{*,A,\alpha}_{xz} H^{*,A,\alpha}_{yx},
  \end{displaymath}
  and
  \begin{displaymath}
    H^{*,A,\alpha}_{f^m(y)f^m(z)} =A_\alpha^{m}(z) \alpha^m( H^{*,A,\alpha}_{yz})A_\alpha^m(y)^{-1}, 
  \end{displaymath}
  for every $y,z\in W^*(x)$ and $m\in \Z$.
\end{proposition}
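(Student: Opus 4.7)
The plan is to realize $H^{s,A,\alpha}_{yz}$ as the limit of a Cauchy sequence obtained from a telescoping argument, with the three fiber-bunching estimates \eqref{eq: fb}, \eqref{eq: alpha is Lips}, \eqref{eq: growth alpha} providing the decay at each step. I would first reduce to the local case $y,z\in W^s_\varepsilon(x)$: for general $y,z\in W^s(x)$ some iterate $f^m$ pushes both points into a local stable set, and the invariance relation stated in the proposition is then \emph{forced to be} the definition of $H^{s,A,\alpha}_{yz}$, with consistency across choices of $m$ following formally from the twisted cocycle identity.

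For $y,z\in W^s_\varepsilon(x)$, set $a_n := \alpha^{-n}\bigl(A^n_\alpha(z)^{-1} A^n_\alpha(y)\bigr)$, $S_n := \alpha^{-n}(A^n_\alpha(z))$ and $B_n := A(f^n(z))^{-1}A(f^n(y))$. The identity $A^{n+1}_\alpha(w) = A(f^n(w))\,\alpha(A^n_\alpha(w))$ yields the recursion $a_{n+1} = S_n^{-1}\,\alpha^{-(n+1)}(B_n)\,S_n\,a_n$, so
\[
\norm{a_{n+1}-a_n} \leq \norm{S_n}\,\norm{S_n^{-1}}\,\norm{\alpha^{-(n+1)}(B_n)-\Id}\,\norm{a_n}.
\]
By \eqref{eq: fb} applied at $z$ the first product is at most $Ce^{\theta n}$. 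The $\nu$-Hölder continuity of $A$ together with the stable contraction gives $\norm{B_n-\Id}\leq C e^{-\nu\lambda n}d(y,z)^\nu$, and then \eqref{eq: alpha is Lips} upgrades this to $\norm{\alpha^{-(n+1)}(B_n)-\Id}\leq C e^{\rho(n+1)}e^{-\nu\lambda n}d(y,z)^\nu$. Altogether,
\[
\norm{a_{n+1}-a_n}\leq C\,e^{-\sigma n}\,d(y,z)^\nu\,\norm{a_n},\qquad \sigma := \nu\lambda-\theta-\rho>0,
\]
positivity of $\sigma$ being amply guaranteed by $5\rho+2\theta<\nu\lambda$.

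A Gronwall-type estimate $\norm{a_n}\leq \prod_{k<n}\bigl(1+Ce^{-\sigma k}\varepsilon^\nu\bigr)\leq \exp\bigl(C\varepsilon^\nu/(1-e^{-\sigma})\bigr)$ furnishes a uniform bound on $\norm{a_n}$ depending only on $\varepsilon$. Feeding this back into the previous display gives $\sum_n \norm{a_{n+1}-a_n} \leq C\,d(y,z)^\nu$, so $(a_n)$ is Cauchy; the limit is $H^{s,A,\alpha}_{yz}$ and satisfies $\norm{H^{s,A,\alpha}_{yz}-\Id}\leq C\,d(y,z)^\nu$. The chain rule $H^{s,A,\alpha}_{yz}=H^{s,A,\alpha}_{xz}H^{s,A,\alpha}_{yx}$ is immediate from multiplying the defining limits, and the equivariance $H^{s,A,\alpha}_{f^m(y)f^m(z)} = A^m_\alpha(z)\,\alpha^m(H^{s,A,\alpha}_{yz})\,A^m_\alpha(y)^{-1}$ follows by substituting $A^n_\alpha(f^m(w)) = A^{m+n}_\alpha(w)\,\alpha^n(A^m_\alpha(w))^{-1}$ into the defining limit for the left-hand side and reindexing $n\mapsto n-m$. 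The unstable case is proved analogously, replacing $f$ by $f^{-1}$ and $\alpha^{-n}$ by $\alpha^n$ throughout.

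The main obstacle will be controlling the three competing exponential rates inside $\norm{a_{n+1}-a_n}$: the automorphism $\alpha^{-(n+1)}$ applied to a small perturbation amplifies it by $e^{\rho n}$ via \eqref{eq: alpha is Lips}; the conjugation by $S_n$ contributes $e^{\theta n}$ from fiber-bunching; and only the stable contraction $e^{-\nu\lambda n}$ pulls against them. In the untwisted setting ($\alpha=\Id$, $\rho=0$) the standard condition $\theta<\nu\lambda$ suffices, as in \cite{BGV03,Via08}; the $\rho$-terms in the fiber-bunching hypothesis are precisely what is needed to absorb the Lipschitz growth of $\alpha^{\pm n}$ introduced by the twisting, and ensuring they can be absorbed without spoiling the Hölder exponent is the technical heart of the argument.
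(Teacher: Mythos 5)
Your proof is correct, and although the overall telescoping/Cauchy-sequence skeleton is the same as the paper's, you obtain the key decay estimate by a genuinely different and simpler route. The paper factors the increment as $\alpha^{-n}\left(A^n_\alpha(z)^{-1}\right)\left[\alpha^{-(n+1)}(B_n)-\Id\right]\alpha^{-n}\left(A^n_\alpha(y)\right)$ and must therefore control $\|\alpha^{-n}(A^n_\alpha(z)^{-1})\|\cdot\|\alpha^{-n}(A^n_\alpha(y))\|$ at \emph{two different points}; since \eqref{eq: fb} is a single-point hypothesis, this forces a detour through a family of adapted norms along the orbit (Lemma \ref{lemma: norms}) and Proposition \ref{prop: auxiliary 1}, at the cost of a factor $e^{(4\rho+2\theta+\delta)n}$ and hence of the full strength of $5\rho+2\theta<\nu\lambda$. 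Your factorization $a_{n+1}-a_n=S_n^{-1}\left[\alpha^{-(n+1)}(B_n)-\Id\right]S_n a_n$ is the same identity (note $S_na_n=\alpha^{-n}(A^n_\alpha(y))$), but distributing the norms as $\|S_n\|\,\|S_n^{-1}\|\,\|a_n\|$ lets you invoke \eqref{eq: fb} at the single point $z$ and replace the two-point estimate by a Gronwall bound on $\|a_n\|$; this needs only $\rho+\theta<\nu\lambda$ and dispenses with the adapted norms entirely for this proposition. The bootstrap is sound: since $a_0=\Id$ and $d(y,z)\leq\operatorname{diam}(M)$, the product $\prod_k\bigl(1+Ce^{-\sigma k}d(y,z)^\nu\bigr)$ is bounded by a constant depending only on $A,\alpha,f$, so the constant in \eqref{eq: holonom are holder} has the required dependence; and your handling of the reduction to $W^s_\varepsilon(x)$, the composition rule and the equivariance agrees with the paper's. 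One caveat: your shortcut does not make Proposition \ref{prop: auxiliary 1} dispensable for the paper as a whole --- its two-point version (Remark \ref{rem: aux inv hol}), applied to an orbit and its shadowing periodic orbit, is still what drives the proof of Lemma \ref{lemma: P su holon}, where the two orbits are merely exponentially close and do not lie on a common stable leaf, so the conjugation trick is not available there.
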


\begin{definition}
  The maps $H^{s,A, \alpha}$ and $H^{u,A,\alpha}$ given by Proposition~\ref{prop: holonomies existence} are called \emph{stable} and \emph{unstable holonomies}, respectively.
\end{definition}

It is worth noticing that the main ideas beyond this concept, even though not under this name, were somehow present in \cite{Wal00} (see also \cite{Par99,Sch99} for the case $\alpha=Id$). On the other hand, the \emph{construction} of this holonomies in that setting is greatly simplified due to the existence of a bi-invariant metric. Similarly, the proof in the case $\alpha=\Id$ is also much simpler when compared to ours due to the lack of twisting (see for instance Proposition 2.5 of \cite{Via08}).

We will prove only the assertions about $H^{s,A,\alpha}_{y z}$ since the ones about $H^{u,A,\alpha}_{y z}$ are similar. We start with the following proposition:

\begin{proposition}\label{prop: auxiliary 1} 
Let $\delta>0$ be so that $5\rho+2\theta +\delta<\lambda\nu$. Then, there exists $C=C(A,\alpha, f,\delta)>0$ such that
\begin{displaymath}
\|\alpha^{-n} (A_\alpha^n(y))\|\cdot \|\alpha^{-n}( A_\alpha ^n(x)^{-1})\| \leq C e^{(4\rho+2\theta+\delta)n}
\end{displaymath}
for all $y\in W^s_{\varepsilon}(x)$, $x\in M$ and $n\geq 0$.
\end{proposition}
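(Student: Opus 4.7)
The plan is to introduce $S_n(x) := \alpha^{-n}(A_\alpha^n(x))$ so that the quantity to bound becomes $U_n(x,y) := \|S_n(y)\|\,\|S_n(x)^{-1}\|$, and to estimate $U_n$ by induction on $n$. The recursion $S_{n+1}(z) = \alpha^{-n-1}(A(f^n(z)))\,S_n(z)$, applied to both $x$ and $y$, yields (by a short induction on $n$) the telescoping identity
\[
S_n(x)^{-1} S_n(y) - \Id \;=\; \sum_{k=0}^{n-1} S_k(x)^{-1}\, \alpha^{-(k+1)}\!\bigl(A(f^k(x))^{-1} A(f^k(y)) - \Id\bigr)\, S_k(y),
\]
which is the workhorse of the argument. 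Each summand has flanking $S_k$-pieces that contribute $U_k(x,y)$ after taking norms, and a central H\"older-small factor.

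Assume inductively that $U_m(x,y) \leq C_0 e^{(4\rho + 2\theta + \delta)m}$ for every $m < n$ and every pair with $d(x,y) \leq \varepsilon_0$, where $\varepsilon_0 > 0$ is to be chosen small. Taking norms in the telescoping identity, and combining the H\"older estimate for $A$ with the stable contraction (which gives $\|A(f^k(y))-A(f^k(x))\|\leq C e^{-\lambda\nu k} d(x,y)^\nu$), condition \eqref{eq: growth alpha} on $\alpha^{-(k+1)}$, and the uniform bound on $A^{-1}$ coming from compactness of $M$, each summand is bounded by $C e^{(\rho - \lambda\nu)k}\, U_k(x,y)\, d(x,y)^\nu$. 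Under the inductive hypothesis this yields a geometric series with common ratio $e^{5\rho + 2\theta + \delta - \lambda\nu}$, which is less than $1$ by hypothesis, so its sum is bounded by a constant; hence $\|S_n(x)^{-1} S_n(y) - \Id\| \leq C' C_0\, d(x,y)^\nu$. Applying fiber-bunching \eqref{eq: fb} at $x$ and writing $\|S_n(y)\| \leq \|S_n(x)\|\cdot\|S_n(x)^{-1} S_n(y)\|$ gives
\[
U_n(x,y) \leq \|S_n(x)\|\,\|S_n(x)^{-1}\|\cdot \|S_n(x)^{-1}S_n(y)\| \leq C'' e^{\theta n}\bigl(1 + C' C_0\, d(x,y)^\nu\bigr).
\]
Since $\theta \leq 4\rho + 2\theta + \delta$, choosing first $C_0 \geq 2C''$ and then $\varepsilon_0$ so that $C' C_0 \varepsilon_0^\nu \leq 1$ closes the induction on the set $\{d(x,y) \leq \varepsilon_0\}$.

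To pass from $d(x,y)\leq \varepsilon_0$ to arbitrary $y \in W^s_\varepsilon(x)$, fix $J = J(\varepsilon, \varepsilon_0, \lambda)$ large enough that the stable contraction forces $d(f^J(x), f^J(y)) \leq \varepsilon_0$. The cocycle relation $A_\alpha^n(y) = A_\alpha^{n-J}(f^J(y))\, \alpha^{n-J}(A_\alpha^J(y))$, together with the analogous relation for $A_\alpha^n(x)^{-1}$, gives $U_n(x,y) \leq C e^{2\rho J}\, U_J(x,y)\, U_{n-J}(f^J(x), f^J(y))$; the last factor is controlled by the induction already established, while $U_J(x,y)$ is bounded by a constant depending only on $J$, $A$ and $\alpha$ (via a crude exponential a priori bound on the norms, using \eqref{eq: growth alpha} and compactness of $M$). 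The main technical obstacle is the careful accounting of exponential rates in the telescoping: the hypothesis $5\rho + 2\theta + \delta < \lambda\nu$ enters precisely at the summability of the geometric series, which is what delivers a bound on $\|S_n(x)^{-1}S_n(y) - \Id\|$ that is uniform in $n$ and thus allows the $e^{\theta n}$ factor from fiber-bunching to survive on its own.
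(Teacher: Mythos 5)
Your argument is correct, but it is genuinely different from the one in the paper. The paper proves Proposition \ref{prop: auxiliary 1} by first building a family of adapted (Lyapunov-type) norms $\|\cdot\|_k$ in which the one-step quantity $\|\alpha^{-k}(A(f^{k-1}(x)))^{-1}\|_k\,\|\alpha^{-k}(A(f^{k-1}(x)))\|_k$ is bounded by $e^{2\theta+\delta}$, then comparing the one-step norms at $x$ and at $y$ via the H\"older continuity of $A$ and multiplying out; the factor $e^{4\rho n}$ in the final exponent is exactly the cost of converting between $\|\cdot\|_k$ and the standard norm. You instead run a direct bootstrap: the telescoping identity for $S_n(x)^{-1}S_n(y)-\Id$, the inductive bound on $U_k$, and the summability of $e^{(5\rho+2\theta+\delta-\lambda\nu)k}$ together show that $S_n(x)^{-1}S_n(y)$ stays uniformly close to $\Id$, after which fiber-bunching at $x$ alone transfers to the pair $(x,y)$. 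This is more elementary (no adapted norms) and in fact delivers the sharper bound $U_n(x,y)\le Ce^{\theta n}$, of which the stated $Ce^{(4\rho+2\theta+\delta)n}$ is a weakening; the price is the reduction to $d(x,y)\le\varepsilon_0$ and the careful ordering of the choices of $C''$, $C_0$ and $\varepsilon_0$, both of which you handle correctly (your concluding cocycle-splitting step, with $U_J$ controlled by a crude a priori bound, is the same device the paper uses elsewhere). Two cosmetic points: since $\alpha^{-(k+1)}$ is a group automorphism of $GL(d,\R)$ and not a linear map on matrices, the central factor in your telescoping identity should be written as $\alpha^{-(k+1)}\bigl(A(f^k(x))^{-1}A(f^k(y))\bigr)-\Id$ rather than $\alpha^{-(k+1)}$ applied to a (possibly non-invertible) difference; and the estimate you then need is the Lipschitz property \eqref{eq: alpha is Lips} applied to the pair $\bigl(A(f^k(x))^{-1}A(f^k(y)),\,\Id\bigr)$, not \eqref{eq: growth alpha}. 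Neither affects the validity of the argument.
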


In order to prove this proposition we need a couple of auxiliary results.

\begin{lemma}\label{lemma: norms}
Fix $x\in M$. There exists a family of norms $(\|\cdot\|_k)_{k\in \N}$ such that
\begin{displaymath}
\frac{\max\left\lbrace\|\alpha^{-k}(A(f^{k-1}(x)))v\|_k; \; \|v\|_{k-1}=1\right\rbrace}{\min\left\lbrace\|\alpha^{-k}(A(f^{k-1}(x)))w\|_k; \; \|w\|_{k-1}=1\right\rbrace}\leq e^{2\theta +\delta}.
\end{displaymath}
Moreover, there exists $C>0$ depending only on $A$, $\alpha$, $f$ and $\delta$ so that
\begin{equation}\label{eq: relation norms}
\|\cdot\|\leq \|\cdot\|_k \leq Ce^{2\rho k} \|\cdot\| \text{ for every }k\in \N.
\end{equation}
\end{lemma}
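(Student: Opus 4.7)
My plan is a Lyapunov-type adapted norm construction along the orbit of $x$. Set $R_m := \alpha^{-m}(A^m_\alpha(x))$, so that the fiber-bunching hypothesis \eqref{eq: fb} reads $\|R_m\|\cdot\|R_m^{-1}\|\leq Ce^{\theta|m|}$ for all $m$. Introduce the shifted blocks $T_{k,n} := R_{k+n}R_k^{-1}$; by the twisted-cocycle identity they satisfy $T_{k,n} = \alpha^{-(k+n)}(A^n_\alpha(f^k(x)))$, and the single-step operator appearing in the lemma factors as $B_k := \alpha^{-k}(A(f^{k-1}(x))) = R_k R_{k-1}^{-1}$, giving the algebraic relation
\[
  T_{k,n}\cdot B_k \;=\; T_{k-1,\,n+1}.
\]
Combining fiber-bunching of $R$ with the growth estimate \eqref{eq: growth alpha} on $\alpha^{-k}$ yields the ``split'' distortion bound
\[
  \|T_{k,n}\|\cdot\|T_{k,n}^{-1}\| \;\leq\; C\,e^{2\rho k + \theta|n|},
\]
in which the $k$-contribution (from the outer automorphism $\alpha^{-k}$) and the $n$-contribution (from the fiber-bunching of the inner product) are separated.

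I would then set $\|\cdot\|_k$ to be a weighted supremum of the form
\[
  \|v\|_k \;:=\; \sup_{n}\, w_{k,n}\,\|T_{k,n}v\|\cdot\|T_{k,n}^{-1}\|,
\]
with weights $w_{k,n}$ decaying in $|n|$ strictly faster than $e^{\theta|n|}$, for instance $w_{k,n}= e^{-(2\theta+\delta)|n|}$. The $n=0$ term contributes exactly $\|v\|$, giving the lower bound in \eqref{eq: relation norms}. For the upper bound, bounding $\|T_{k,n}v\|\leq \|T_{k,n}\|\|v\|$ and inserting the displayed distortion estimate, the weight's decay in $n$ dominates the $e^{\theta|n|}$-growth of the distortion, and the supremum converges to something of size $Ce^{2\rho k}\|v\|$, as claimed.

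The $e^{2\theta+\delta}$-distortion bound for $B_k$ is then verified by applying the identity $T_{k,n}B_k=T_{k-1,n+1}$ inside the defining supremum of $\|B_kv\|_k$ and reindexing $m=n+1$. This converts $\|B_kv\|_k$ into a supremum of the same form as $\|v\|_{k-1}$, times an overall factor $e^{2\theta+\delta}$ coming from the weight shift, except that the normalizing factor $\|T_{k-1,m}^{-1}\|$ is replaced by $\|T_{k,m-1}^{-1}\|$. The identity $T_{k-1,m}^{-1}=B_k^{-1}\cdot T_{k,m-1}^{-1}$ measures this mismatch, and the weights $w_{k,n}$ must be arranged so that the mismatch cancels after reindexing.

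The main obstacle is precisely this last cancellation. A naive one-sided supremum leaves a parasitic factor of $\|B_k\|\cdot\|B_k^{-1}\|\sim e^{2\rho k}$ in the distortion, which would destroy the uniformity in $k$ of the bound $e^{2\theta+\delta}$. The construction must therefore be tuned so that the algebraic cocycle identity matches the index shift in the supremum --- for instance by taking the two-sided supremum over $n\in\mathbb{Z}$ so that the contributions at negative indices in $\|v\|_{k-1}$ absorb the offending $\|B_k^{\pm 1}\|$-factors produced by the shift. Once this delicate matching is in place, the three statements of the lemma all reduce to direct manipulation of the defining supremum, using only fiber-bunching \eqref{eq: fb} and the growth estimate \eqref{eq: growth alpha}.
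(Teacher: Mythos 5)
Your setup is sound and closely parallels the paper's: the identities $T_{k,n}=\alpha^{-(k+n)}(A^n_\alpha(f^k(x)))$, $B_k=T_{k-1,1}$ and $T_{k,n}B_k=T_{k-1,n+1}$ are exactly the algebra the paper uses, the weight $e^{-(2\theta+\delta)|n|}$ beats the $e^{\theta|n|}$ fiber-bunching growth so the norm is well defined, the $n=0$ term gives $\|v\|\le\|v\|_k$, and the split bound $\|T_{k,n}\|\|T_{k,n}^{-1}\|\le Ce^{2\rho k+\theta|n|}$ gives $\|v\|_k\le Ce^{2\rho k}\|v\|$. But the proof of the distortion bound --- the actual content of the lemma --- is not there. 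You correctly identify the obstruction: after reindexing, the normalizer $\|T_{k,m-1}^{-1}\|$ must be compared with $\|T_{k-1,m}^{-1}\|$, and the identity $T_{k-1,m}^{-1}=B_k^{-1}T_{k,m-1}^{-1}$ only controls the ratio up to $\|B_k\|\,\|B_k^{-1}\|\sim e^{2\rho k}$. This ratio depends on $m$, so it does not factor out of the supremum and does not cancel between the max and the min (the two extremizing vectors may realize the sup at different $m$). Your proposed remedy --- ``take the two-sided supremum over $n\in\Z$'' --- is vacuous here: the supremum is already two-sided, and the negative-index terms of $\|\cdot\|_{k-1}$ do not absorb anything; the mismatch persists term by term. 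As written, your construction yields $\max/\min\le e^{2(2\theta+\delta)}\|B_k\|\|B_k^{-1}\|$, which is exactly the non-uniform bound the lemma must avoid.

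The missing idea, which is how the paper resolves this, is to normalize not by the operator norm $\|T_{k,n}^{-1}\|$ but by the image of a dynamically propagated reference vector: fix a unit vector $u_0$, set $u_k=B_ku_{k-1}/\|B_ku_{k-1}\|$, and define $\|v\|_k^2=\sum_{m}\|T_{k,m}v\|^2\big(\|T_{k,m}u_k\|^2e^{(2\theta+\delta)|m|}\big)^{-1}$. Because $u_k$ transforms covariantly under $B_k$, the substitution $v\mapsto B_kv$ followed by the reindexing $m\mapsto m+1$ turns each normalizer $\|T_{k,m}u_k\|$ into $\|T_{k-1,m+1}u_{k-1}\|/\|B_ku_{k-1}\|$; the parasitic factor is now the \emph{scalar} $\|B_ku_{k-1}\|$, independent of both $v$ and $m$, so it cancels exactly in the max/min ratio, leaving only the weight-shift cost $e^{\pm(2\theta+\delta)/2}$ on each side (in the squared norm), i.e.\ $e^{2\theta+\delta}$ total. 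The quantity $\|T_{k,m}u_k\|$ still serves as a proxy for $\|T_{k,m}\|$ up to the fiber-bunching distortion, so the two-sided comparison \eqref{eq: relation norms} survives. Without some device of this kind your weights $w_{k,n}$ cannot be ``arranged so that the mismatch cancels'': any $v$-independent reweighting either fails to kill the $m$-dependent factor or destroys the upper bound in \eqref{eq: relation norms}, since $\|T_{k,n}\|$ alone (without the compensating $\|T_{k,n}^{-1}\|$ or $\|T_{k,n}u_k\|^{-1}$) is not controlled by the fiber-bunching hypothesis.
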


\begin{proof}
Fix $u_0\in \R^d$ with $\|u_0\|=1$ and for any $k\in \Z$ set
\begin{equation*}
u_k=\frac{\alpha^{-k}(A(f^{k-1}(x)))u_{k-1}}{\|\alpha^{-k}(A(f^{k-1}(x)))u_{k-1}\|}.
\end{equation*}
Now, given $v\in \R^d$ define
\begin{equation}\label{eq: def norm k}
\|v\|_k^2=\sum_{m\in \Z} \frac{\|\alpha^{-m-k}(A^m_\alpha(f^k(x)))v\|^2}{\|\alpha^{-m-k}(A^m_\alpha(f^k(x)))u_k\|^2 \cdot e^{(2\theta+\delta)|m|}}.
\end{equation}
We start observing that from \eqref{eq: growth alpha}
\begin{displaymath}
\|\alpha^{-m-k}(A^m_\alpha(f^k(x)))\frac{v}{\|v\|}\|\|\alpha^{-m-k}(A^m_\alpha(f^k(x)))u_k\|^{-1}
\end{displaymath}
is smaller than or equal to
\begin{displaymath}
Ce^{2\rho k} \|\alpha^{-m}(A^m_\alpha(f^k(x)))\frac{v}{\|v\|}\|\|\alpha^{-m}(A^m_\alpha(f^k(x)))u_k\|^{-1}.
\end{displaymath}
Thus, using our hypothesis \eqref{eq: fb} and the fact that $\|T\|^{-1}\leq \|T^{-1}\|$ for any $T\in GL(d,\R)$ we get that the last quantity is smaller than or equal to $C^2e^{2\rho k}e^{\theta |m|}$. In particular,
\begin{displaymath}
\|\alpha^{-m-k}(A^m_\alpha(f^k(x)))v\|\|\alpha^{-m-k}(A^m_\alpha(f^k(x)))u_k\|^{-1}\leq C^2e^{2\rho k}e^{\theta |m|}\|v\|
\end{displaymath}
for every $m\in \Z$ and thus
\begin{displaymath}
\|v\|_k^2\leq \sum_{m\in \Z}\frac{\left( C^2e^{2\rho k}e^{\theta |m|}\|v\|\right)^2}{e^{(2\theta+\delta)|m|}}\leq \tilde{C}e^{4\rho k}\|v\|^2
\end{displaymath}
where $\tilde{C}=\sum_{m\in \Z}C^4e^{-\delta |m|}<\infty$. Consequently, the series \eqref{eq: def norm k} converges and $\|\cdot\|_k$ is well defined. Moreover
$$\|v\|_k\leq Ce^{2\rho k}\|v\|$$
for any $v\in \R^d$ and some $C>0$ independent of $k$ and $x$. Furthermore, recalling that $\alpha^{-k}(\Id)=\Id$ and $\|u_k\|=1$, looking at the term of \eqref{eq: def norm k} when $m=0$ it follows that $\|v\|\leq \|v\|_k$ for every $v\in \R^d$ which combined with the previous observations completes the proof of \eqref{eq: relation norms}. In order to prove the other claim, we observe that
\begin{displaymath}
\begin{split}
\|\alpha^{-k}(A(f^{k-1}(x))) v\|_k^2 &=\sum_{m\in \Z} \frac{\|\alpha^{-m-k}(A^m_\alpha(f^k(x)))\alpha^{-k}(A(f^{k-1}(x))) v\|^2}{\|\alpha^{-m-k}(A^m_\alpha(f^k(x)))u_k\|^2 \cdot e^{(2\theta+\delta)|m|}}\\
&=\sum_{m\in \Z} \frac{\|\alpha^{-m-k}(A^m_\alpha(f^k(x)))\alpha^{-k}(A(f^{k-1}(x))) v\|^2}{\|\alpha^{-m-k}(A^m_\alpha(f^k(x)))\left(\frac{\alpha^{-k}(A(f^{k-1}(x)))u_{k-1}}{\|\alpha^{-k}(A(f^{k-1}(x)))u_{k-1}\|}\right)\|^2 \cdot e^{(2\theta+\delta)|m|}}\\
&=\sum_{m\in \Z} \frac{\|\alpha^{-m-k}(A^{m+1}_\alpha(f^{k-1}(x))) v\|^2 \|\alpha^{-k}(A(f^{k-1}(x)))u_{k-1}\|^2}{\|\alpha^{-m-k}(A^{m+1}_\alpha(f^{k-1}(x)))u_{k-1}\|^2 \cdot e^{(2\theta+\delta)|m|}}\\
&=\|\alpha^{-k}(A(f^{k-1}(x)))u_{k-1}\|^2 \cdot S(v) \\
\end{split}
\end{displaymath}
where 
$$S(v):=\sum_{m\in \Z} \frac{\|\alpha^{-(m+1)-(k-1)}(A^{m+1}_\alpha(f^{k-1}(x))) v\|^2 }{\|\alpha^{-(m+1)-(k-1)}(A^{m+1}_\alpha(f^{k-1}(x)))u_{k-1}\|^2 \cdot e^{(2\theta+\delta)|m|}}.$$
Now, since $|m+1|\geq |m|-1$, we get that $S(v)\leq e^{2\theta+\delta}\|v\|_{k-1}^2$. Similarly, since $|m+1|\leq |m|+1$, we get that $S(v)\geq e^{-(2\theta+\delta)}\|v\|_{k-1}^2$. Combining these facts with the previous observations it follows that
\begin{displaymath}
\begin{split}
e^{-(\theta+\frac{\delta}{2})}\|\alpha^{-k}(A(f^{k-1}(x)))u_{k-1}\|\|v\|_{k-1}& \leq \|\alpha^{-k}(A(f^{k-1}(x))) v\|_k\\
&\leq e^{\theta+\frac{\delta}{2}}\|\alpha^{-k}(A(f^{k-1}(x)))u_{k-1}\|\|v\|_{k-1}
\end{split} 
\end{displaymath}
for any $v\in \R^d$. Thus, taking $v,w\in \R^d$ so that $\|v\|_{k-1}=\|w\|_{k-1}=1$ it follows that
\begin{displaymath}
\begin{split}
e^{-(2\theta+\delta)}\|\alpha^{-k}(A(f^{k-1}(x)))v\|_k &\leq \|\alpha^{-k}(A(f^{k-1}(x))) w\|_k \\
& \leq e^{2\theta+\delta}\|\alpha^{-k}(A(f^{k-1}(x)))v\|_k.
\end{split}
\end{displaymath}
Consequently,
\begin{displaymath}
\frac{\max\left\lbrace\|\alpha^{-k}(A(f^{k-1}(x)))v\|_k; \; \|v\|_{k-1}=1\right\rbrace}{\min\left\lbrace\|\alpha^{-k}(A(f^{k-1}(x)))w\|_k; \; \|w\|_{k-1}=1\right\rbrace}\leq e^{2\theta +\delta}
\end{displaymath}
as claimed.
\end{proof}

Thus, defining the \emph{$k$-norm} of an operator $T\in GL(d,\R)$ with respect to the family of norms $(\|\cdot\|_k)_{k\in \N}$ by
\begin{equation*}
\|T\|_k=\sup_{v\neq 0} \frac{\|Tv\|_k}{\|v\|_{k-1}}
\end{equation*}
it follows easily from the previous lemma that
\begin{corollary}\label{cor: growth adap norm} 
For any $k\in \N$, 
\begin{displaymath}
\|\alpha^{-k}(A(f^{k-1}(x)))^{-1}\|_k \|\alpha^{-k}(A(f^{k-1}(x)))\|_k\leq e^{2\theta+\delta}.
\end{displaymath}
\end{corollary}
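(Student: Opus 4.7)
The plan is to deduce the corollary directly from Lemma \ref{lemma: norms} by recognizing the left-hand side as exactly the max-over-min ratio bounded there. Set $T := \alpha^{-k}(A(f^{k-1}(x)))$, viewed as a linear isomorphism from $(\R^d,\|\cdot\|_{k-1})$ to $(\R^d,\|\cdot\|_k)$.

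From the definition of the $k$-norm of an operator I have immediately
$$\|T\|_k = \max\{\|Tv\|_k : \|v\|_{k-1}=1\}.$$
For the inverse, $T^{-1}$ naturally maps $(\R^d,\|\cdot\|_k)$ back to $(\R^d,\|\cdot\|_{k-1})$, and the only consistent interpretation of $\|T^{-1}\|_k$ in this mixed-norm setup is the corresponding operator norm $\sup_{w\neq 0}\|T^{-1}w\|_{k-1}/\|w\|_k$. Substituting $w=Tv$ rewrites this as
$$\|T^{-1}\|_k = \frac{1}{\min\{\|Tv\|_k : \|v\|_{k-1}=1\}}.$$

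Multiplying the two expressions, the product collapses into the ratio
$$\|T^{-1}\|_k\,\|T\|_k \;=\; \frac{\max\{\|Tv\|_k : \|v\|_{k-1}=1\}}{\min\{\|Tw\|_k : \|w\|_{k-1}=1\}},$$
which is precisely the quantity Lemma \ref{lemma: norms} bounds above by $e^{2\theta+\delta}$, completing the argument.

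There is no genuine obstacle here: all the analytic work, in particular the construction of the adapted family $(\|\cdot\|_k)_k$ satisfying the comparability estimate \eqref{eq: relation norms} and the control of the max/min ratio, was already carried out in Lemma \ref{lemma: norms}. The corollary is simply a repackaging of that ratio as a product of mixed-norm operator norms on $GL(d,\R)$, so the only thing worth being careful about is fixing the (otherwise ambiguous) convention for $\|T^{-1}\|_k$ in the statement; once that is done the inequality is immediate.
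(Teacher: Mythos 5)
Your argument is correct and is exactly the computation the paper leaves implicit (the corollary is asserted to follow ``easily'' from Lemma \ref{lemma: norms}): with the mixed-norm convention $\|T^{-1}\|_k=\sup_{w\neq 0}\|T^{-1}w\|_{k-1}/\|w\|_k$, the product $\|T^{-1}\|_k\|T\|_k$ is precisely the max/min ratio the lemma bounds by $e^{2\theta+\delta}$. Your point about fixing that convention is also the right one, since it is the reading needed later when the proof of Proposition \ref{prop: auxiliary 1} telescopes $\|\alpha^{-k}(A^k_\alpha(x))^{-1}\|_k$ into a product of the factors $\|\alpha^{-j}(A(f^{j-1}(x)))^{-1}\|_j$.
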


\begin{proof}[Proof of Proposition \ref{prop: auxiliary 1}] Let $(\|\cdot\|_k)_{k\in \Z}$ be the family of norms given by Lemma \ref{lemma: norms}. Recalling \eqref{eq: alpha is Lips}, \eqref{eq: growth alpha} and \eqref{eq: relation norms}, we start observing that
\begin{displaymath}
\begin{split}
\frac{\|\alpha^{-k}(A(f^{k-1}(y)))\|_k}{\|\alpha^{-k}(A(f^{k-1}(x)))\|_k}&\leq 1+ \frac{\left|\|\alpha^{-k}(A(f^{k-1}(y)))\|_k-\|\alpha^{-k}(A(f^{k-1}(x)))\|_k \right| }{\|\alpha^{-k}(A(f^{k-1}(x)))\|_k} \\
& \leq 1+ \frac{\|\alpha^{-k}(A(f^{k-1}(y))) - \alpha^{-k}(A(f^{k-1}(x)))\|_k  }{\|\alpha^{-k}(A(f^{k-1}(x)))\|_k} \\
&\leq 1+ \frac{Ce^{2\rho k}\|\alpha^{-k}(A(f^{k-1}(y))) - \alpha^{-k}(A(f^{k-1}(x)))\|  }{\|\alpha^{-k}(A(f^{k-1}(x)))\|} \\
&\leq 1+ \frac{Ce^{4\rho k}\|A(f^{k-1}(y)) - A(f^{k-1}(x))\|  }{\|A(f^{k-1}(x))\|}. \\
\end{split}
\end{displaymath}
Thus, since $A$ is $\nu$-H\"older and $M$ is compact and recalling Definition \ref{def: hyperbolic homeo} it follows that
\begin{displaymath}
\begin{split}
\frac{\|\alpha^{-k}(A(f^{k-1}(y)))\|_k}{\|\alpha^{-k}(A(f^{k-1}(x)))\|_k}&\leq 1+ Ce^{(4\rho -\lambda \nu)k} d(x,y)^\nu. \\
\end{split}
\end{displaymath}
Now, Corollary \ref{cor: growth adap norm} gives us that for any $j\in \mathbb{N}$,
\begin{displaymath}
\|\alpha^{-j}(A(f^{j-1}(x)))^{-1}\|_j \leq \frac{e^{2\theta+\delta}}{\|\alpha^{-j}(A(f^{j-1}(x)))\|_j}.
\end{displaymath}
Combining these two observations with the fact that
\begin{displaymath}
\begin{split}
\|\alpha^{-k}(A^k_\alpha(x))^{-1}\|_k &= \|\alpha^{-1}(A(x))^{-1}\alpha^{-2}(A(f(x)))^{-1}\ldots\alpha^{-k}(A(f^{k-1}(x)))^{-1}\|_k \\
&\leq \|\alpha^{-1}(A(x))^{-1}\|_1 \|\alpha^{-2}(A(f(x)))^{-1}\|_2\ldots \|\alpha^{-k}(A(f^{k-1}(x)))^{-1}\|_k 
\end{split}
\end{displaymath}
and similarly
\begin{displaymath}
\|\alpha^{-k}(A^k_\alpha(y))\|_k \leq \|\alpha^{-k}(A(f^{k-1}(y)))\|_k \ldots \|\alpha^{-2}(A(f(x)))\|_2 \|\alpha^{-1}(A(x))\|_1
\end{displaymath}
it follows that
\begin{displaymath}
\begin{split}
\|\alpha^{-k}(A^k_\alpha(x))^{-1}\|_k\|\alpha^{-k}(A^k_\alpha(y))\|_k &\leq \frac{\|\alpha^{-1}(A(y))\|_1 }{\|\alpha^{-1}(A(x))\|_1}e^{2\theta+\delta} \ldots\frac{\|\alpha^{-k}(A(f^{k-1}(y)))\|_k }{\|\alpha^{-k}(A(f^{k-1}(x)))\|_k} e^{2\theta+\delta} \\
&\leq e^{(2\theta+\delta)k} \prod_{j=1}^k \left(1+ Ce^{(4\rho -\lambda \nu)j}d(x,y)^\nu  \right)\\
&\leq \tilde{C}e^{(2\theta+\delta)k}
\end{split}
\end{displaymath}
where $\tilde{C}=\prod_{j=1}^{\infty} \left(1+ CDe^{(4\rho -\lambda \nu)j}  \right)<\infty$ and $D=\sup_{x,y\in M}d(x,y)^\nu$ (recall that $4\rho -\lambda \nu <0$). Thus, since by \eqref{eq: relation norms} we have that $\|T\|\leq Ce^{2\rho k}\|T\|_k$ for every $T\in GL(d,\R)$ it follows that
$$\|\alpha^{-k}(A^k_\alpha(x))^{-1}\|\|\alpha^{-k}(A^k_\alpha(y))\|\leq Ce^{(4\rho+2\theta+\delta)k} $$
for some constant $C$ independent of $x$ and $y$ as claimed.
\end{proof}

We are now ready to prove the main proposition of this section.

\begin{proof}[Proof of Proposition \ref{prop: holonomies existence}] By taking forward iterates we can assume that $y,z\in W^u_{\frac{\varepsilon}{2}}(x)$. In particular, $z\in W^s_{\varepsilon}(y)$. We are going to show that the sequence $\left(\alpha^{-n}\left( A_\alpha^{n}(z)^{-1} A_\alpha^{n}(y)\right)\right)_n$ is a Cauchy sequence. In order to do it we start observing that for every $n\in \N$,
\begin{displaymath}
\begin{split}
\|\alpha^{-(n+1)}\left( A_\alpha^{n+1}(z)^{-1} A_\alpha^{n+1}(y)\right)-\alpha^{-n}\left( A_\alpha^{n}(z)^{-1} A_\alpha^{n}(y)\right)\|
\end{split}
\end{displaymath}
is equal to 
\begin{displaymath}
\begin{split}
\|\alpha^{-n}\left( A_\alpha^{n}(z)^{-1} \right) \alpha^{-(n+1)}\left( A(f^n(z))^{-1}A(f^n(y)) \right) \alpha^{-n}\left(  A_\alpha^{n}(y)\right)-\alpha^{-n}\left( A_\alpha^{n}(z)^{-1}\right) \alpha^{-n}\left(A_\alpha^{n}(y)\right)\|
\end{split}
\end{displaymath}
which is smaller than or equal to
\begin{displaymath}
\begin{split}
\|\alpha^{-n}\left( A_\alpha^{n}(z)^{-1} \right)\| \| \alpha^{-n}\left(A_\alpha^{n}(y)\right)\| \| \alpha^{-(n+1)}\left( A(f^n(z))^{-1}A(f^n(y)) \right) -\Id \|.
\end{split}
\end{displaymath}
From Proposition \ref{prop: auxiliary 1} it follows that the previous quantity is smaller than or equal to
\begin{displaymath}
 C e^{(4\rho+2\theta+\delta)n}  \| \alpha^{-(n+1)}\left( A(f^n(z))^{-1}A(f^n(y)) \right) -\Id \|.
\end{displaymath}
Thus, since 
\begin{displaymath}
\begin{split}
 \| \alpha^{-(n+1)}\left( A(f^n(z))^{-1}A(f^n(y)) \right) -\Id \|&= \| \alpha^{-(n+1)}\left( A(f^n(z))^{-1}A(f^n(y)) \right) -\alpha^{-(n+1)}(\Id) \| \\
 & \leq C e^{\rho (n+1)}\| A(f^n(z))^{-1}A(f^n(y)) -\Id \|\\
 & \leq Ce^{\rho (n+1)}e^{-\nu\lambda n}d(z,y)^\nu\\
 &= Ce^\rho e^{(\rho -\nu\lambda) n}d(z,y)^\nu
\end{split}
\end{displaymath}
we get that
\begin{displaymath}
\begin{split}
\|\alpha^{-(n+1)}\left( A_\alpha^{n+1}(z)^{-1} A_\alpha^{n+1}(y)\right)-\alpha^{-n}\left( A_\alpha^{n}(z)^{-1} A_\alpha^{n}(y)\right)\| &\leq C e^{(4\rho+2\theta+\delta)n}Ce^\rho e^{(\rho -\nu\lambda) n}d(z,y)^\nu\\
&=  C e^{(5\rho+2\theta+\delta -\nu\lambda) n}d(z,y)^\nu.\\
\end{split}
\end{displaymath}
Therefore, since $5\rho+2\theta+\delta -\nu\lambda<0$, we get that the sequence $\left(\alpha^{-n}\left( A_\alpha^{n}(z)^{-1} A_\alpha^{n}(y)\right)\right)_n$ is indeed a Cauchy sequence. Consequently 
\begin{displaymath}
    H^{s,A,\alpha}_{yz}= \lim_{n\to+\infty} \alpha^{-n}\left( A_\alpha^{n}(z)^{-1} A_\alpha^{n}(y) \right)
  \end{displaymath}
  exists and moreover
  \begin{displaymath}
\|H^{s,A,\alpha}_{yz}- \Id \| \leq C d(y,z)^\nu,
  \end{displaymath}
  whenever $y,z\in W^s_\varepsilon(x)$ as claimed.

To prove the last claim we start observing that, on the one hand,
\begin{displaymath}
\alpha^{-n}\left( A_\alpha^{n}(z)^{-1} A_\alpha^{n}(y) \right) \xrightarrow{n\to \infty}  H^{s,A,\alpha}_{yz}.
\end{displaymath}
On the other hand,
\begin{displaymath}
\alpha^{-n}\left( A_\alpha^{n}(z)^{-1} A_\alpha^{n}(y) \right) 
\end{displaymath}
is equal to 
\begin{displaymath}
\alpha^{-m}\left( A_\alpha^{m}(z)^{-1}\right) \alpha^{-m} \left( \alpha^{-(n-m)}\left(A_\alpha^{n-m}(f^m(z))^{-1} A_\alpha^{n-m}(f^m(y)) \right) \right) \alpha^{-m} \left( A_\alpha^{m}(y) \right)
\end{displaymath}
which converges to 
\begin{displaymath}
\alpha^{-m}\left( A_\alpha^{m}(z)^{-1}\right) \alpha^{-m}\left( H^{s,A,\alpha}_{f^m(y)f^m(z)} \right) \alpha^{-m} \left( A_\alpha^{m}(y) \right)
\end{displaymath}
as $n$ goes to infinity. Combining these observations we conclude that 
\begin{displaymath}
    H^{s,A,\alpha}_{f^m(y)f^m(z)} =A_\alpha^{m}(z) \alpha^m( H^{s,A,\alpha}_{yz})A_\alpha^m(y)^{-1}, 
  \end{displaymath}
as claimed.
\end{proof}

\begin{remark}\label{rem: aux inv hol}
From the proof of Proposition \ref{prop: auxiliary 1} we can easily see that in order to get
\begin{displaymath}
\|\alpha^{-k} (A_\alpha^k(y))\|\cdot \|\alpha^{-k}( A_\alpha ^k(x)^{-1})\| \leq C e^{(4\rho+2\theta+\delta)k}
\end{displaymath}
for every $0\leq k\leq n$ we don't actually need $y\in W^s_\varepsilon(x)$. In fact, we only need $x$ and $y$ to satisfy $d(f^k(x),f^k(y))\leq Ce^{-\gamma k}d(x,y)$ for every $0\leq k\leq n$ and some $\gamma \in (0,\lambda)$ satisfying $4\rho + \delta <\nu \gamma$. In this case, the constant $C$ will depend on $A$, $\alpha$, $f$, $\delta$ and $\gamma$. We are going to use this fact in the sequel.
\end{remark}

The notions of fiber-bunching and invariant holonomies in the case when $\alpha=\Id$ have been playing an important role in many subareas of Dynamical Systems and arise naturally in various different contexts (for instance, \cite{AV10,Bac15,BBB18,BGV03,Sad15,Via08}). Therefore, Proposition \ref{prop: holonomies existence} is also likely to have many applications and can be seen as interesting in itself.

In order to simplify notation, in what follows, whenever $\alpha$ is fixed and there is no ambiguity, we simply write $H^{*,A}$ instead of $H^{*,A,\alpha}$, for $*=s,u$, to denote the stable and unstable holonomy associated to $A_\alpha$.

\section{Constructing the transfer map}

In this section we are going to build ``explicitly" the transfer map. The method we use is similar to that used in \cite{Bac15,BK16} and \cite{Sad15} in the untwisted setting and by \cite{Wal00} in the twisted one: using the invariant holonomies we define the transfer map on a dense set, prove that restricted to it, it is H\"older continuous and then extend it to the closure getting the desired result.

Assume there exists $x\in M$ such that $f(x)=x$. For such a point, we write $W(x):=W^s(x)\cap W^u(x)$. We start defining $P\colon W(x)\to GL(d,\R)$ by
\begin{displaymath}
  P(y)=H^{s,A}_{xy} (H^{s,B}_{xy})^{-1}=H^{s,A,}_{xy} H^{s,B}_{yx},
\end{displaymath}
where $H^{s,A}$ and $H^{s,B}$ are the holonomy maps given by Proposition \ref{prop: holonomies existence} associated to the twisted cocycles $A_\alpha$ and $B_\alpha$, respectively.

Note that $P$ satisfies 
\begin{equation*}
  A_\alpha^n(y)=P(f^n(y)) B_\alpha^n(y) \alpha^n( P(y)^{-1})
\end{equation*}
for every $y\in W(x)$ and every $n\in\N$. Indeed, using that $f(x)=x$, Proposition \ref{prop: holonomies existence} and the hypothesis on periodic points \eqref{eq: periodic orbit cond main teo},
\begin{displaymath}
\begin{split}
P(f^n(y)) & = H^{s,A}_{xf^n(y)} H^{s,B}_{f^n(y)x} = H^{s,A}_{f^n(x)f^n(y)} H^{s,B}_{f^n(y)f^n(x)}\\
& =A^n_\alpha(y) \alpha^n(H^{s,A}_{xy})A^n_\alpha(x)^{-1} B_\alpha^n(x) \alpha^n(H^{s,B}_{yx})B_\alpha ^n(y)^{-1} \\
& =A^n_\alpha(y) \alpha^n(H^{s,A}_{xy} H^{s,B}_{yx})B_\alpha ^n(y)^{-1} \\
&=A^n_\alpha(y) \alpha^n(P(y))B_\alpha ^n(y)^{-1} \\
\end{split}
\end{displaymath}
and thus
$$ A_\alpha^n(y)=P(f^n(y)) B_\alpha^n(y) \alpha^n( P(y)^{-1})$$
as claimed.

We will now show that $P$ is $\nu$-H\"older continuous. This will allow us to extend $P$ to $\overline{W(x)}=M$ and thus to get the desired transfer map. The main ingredient in the proof is the next lemma which says that $P$ can be interchangeably defined using stable or unstable holonomies. Its proof is similar to the one of \cite[Lemma 3]{Bac15} and we only present the full details of it because of its main role in our proof and also because the presence of twist makes some estimates a little more involved than in the untwisted case.

\begin{lemma}\label{lemma: P su holon}
For every $y\in W(x)$,
  \begin{displaymath}
    P(y)=H^{s,A}_{xy} H^{s,B}_{yx}= H^{u,A}_{xy} H^{u,B}_{yx}.
  \end{displaymath}
\end{lemma}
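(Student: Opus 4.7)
The plan is to show $\Phi(y)=\Psi(y)$, where $\Phi(y):=H^{s,A}_{xy}H^{s,B}_{yx}$ and $\Psi(y):=H^{u,A}_{xy}H^{u,B}_{yx}$, by periodic approximation in the spirit of \cite[Lemma 3]{Bac15}. First, applying the invariance formula of Proposition~\ref{prop: holonomies existence} to each of the four holonomies, together with $f(x)=x$ and the periodic-orbit hypothesis $A^n_\alpha(x)=B^n_\alpha(x)$, I would check that both $\Phi$ and $\Psi$ transform identically along the orbit of $y$, namely
\[
\Phi(f^n(y))=A^n_\alpha(y)\,\alpha^n(\Phi(y))\,B^n_\alpha(y)^{-1},\qquad \Psi(f^n(y))=A^n_\alpha(y)\,\alpha^n(\Psi(y))\,B^n_\alpha(y)^{-1},
\]
so the ratio $\Delta(y):=\Phi(y)\Psi(y)^{-1}$ satisfies the intertwining $\Delta(f^n(y))=A^n_\alpha(y)\,\alpha^n(\Delta(y))\,A^n_\alpha(y)^{-1}$. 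Unwinding the defining limits of the holonomies, cancelling $A^k_\alpha(x)=B^k_\alpha(x)$ and using the twisted cocycle identity, a short computation further recasts this as
\[
\Delta(y)=\lim_{n\to\infty}\alpha^{-n}\!\left(A^n_\alpha(y)^{-1}\bigl[B^{2n}_\alpha(z_n)A^{2n}_\alpha(z_n)^{-1}\bigr]A^n_\alpha(y)\right),\qquad z_n:=f^{-n}(y).
\]

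Next, I would produce periodic approximants of $y$. Since $y\in W^s(x)\cap W^u(x)$ and $f(x)=x$, both $z_n$ and $f^{2n}(z_n)=f^n(y)$ approach $x$ exponentially, so $\delta_n:=d(z_n,f^{2n}(z_n))\to 0$. The Anosov closing lemma then yields, for $n$ large, a periodic point $p_n$ with $f^{2n}(p_n)=p_n$ satisfying the shadowing estimate
\[
d(f^k(p_n),f^k(z_n))\le C e^{-\lambda\min(k,2n-k)}\delta_n,\qquad 0\le k\le 2n.
\]
This exponential closeness is strong enough to invoke Remark~\ref{rem: aux inv hol} along the two orbits, and the periodic-orbit hypothesis \eqref{eq: periodic orbit cond main teo} gives $A^{2n}_\alpha(p_n)=B^{2n}_\alpha(p_n)$; thus the bracket in the formula above would be exactly $\Id$ if $z_n$ were replaced by $p_n$.

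The heart of the argument is then to quantify the size of $E_n:=B^{2n}_\alpha(z_n)A^{2n}_\alpha(z_n)^{-1}$. Using the invariance of the stable/unstable holonomies for both $A_\alpha$ and $B_\alpha$ along the pair of shadowing orbits, I would factor
\[
B^{2n}_\alpha(z_n)=H_1\,B^{2n}_\alpha(p_n)\,\alpha^{2n}(H_2),\qquad A^{2n}_\alpha(z_n)=H_1'\,A^{2n}_\alpha(p_n)\,\alpha^{2n}(H_2'),
\]
where $H_1,H_1'$ (resp.\ $H_2,H_2'$) are compositions of stable/unstable holonomies between $f^{2n}(z_n)$ and $p_n$ (resp.\ between $z_n$ and $p_n$), each satisfying $\|\cdot-\Id\|\le C\delta_n^\nu\le Ce^{-\nu\lambda n}$. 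Combining with $A^{2n}_\alpha(p_n)=B^{2n}_\alpha(p_n)$ and the fiber-bunching bounds of Remark~\ref{rem: aux inv hol} along the orbit of $p_n$, this gives an estimate of the form $\|E_n-\Id\|\le Ce^{(4\rho+\theta+\delta-\nu\lambda)n}$. Plugging this back into the formula for $\Delta(y)$ and using the fiber-bunching bound $\|\alpha^{-n}(A^n_\alpha(y))\|\,\|\alpha^{-n}(A^n_\alpha(y)^{-1})\|\le Ce^{\theta n}$ from \eqref{eq: fb} together with $\|\alpha^{-n}(E_n)-\Id\|\le Ce^{\rho n}\|E_n-\Id\|$ from \eqref{eq: alpha is Lips}, one obtains
\[
\|\Delta(y)-\Id\|\le C e^{(5\rho+2\theta+\delta-\nu\lambda)n}\xrightarrow[n\to\infty]{}0,
\]
for any $\delta>0$ small enough that $5\rho+2\theta+\delta<\nu\lambda$. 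Hence $\Delta(y)=\Id$, as required.

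The hard part will be the factorization-and-bookkeeping step above: the twist $\alpha$ flanks every cocycle product by factors of $\alpha^{2n}$, so one must simultaneously control the $e^{\theta n}$ growth from fiber-bunching, the $e^{\rho n}$ growth from \eqref{eq: alpha is Lips}--\eqref{eq: growth alpha}, and the decay $e^{-\nu\lambda n}$ coming from the H\"older continuity of the generators. The strict inequality $5\rho+2\theta<\nu\lambda$ in the definition of fiber-bunching is precisely the budget that permits these competing factors to cancel in the end.
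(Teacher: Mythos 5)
Your general strategy---period-$2n$ approximants from the Anosov closing lemma, the periodic data at $p_n$, fiber-bunching to pay for the comparison---is the same as the paper's, and your reduction $\Delta(y)=\lim_n\alpha^{-n}\bigl(A^n_\alpha(y)^{-1}E_nA^n_\alpha(y)\bigr)$ with $E_n=B^{2n}_\alpha(z_n)A^{2n}_\alpha(z_n)^{-1}$ is algebraically correct. The gap is the central estimate $\|E_n-\Id\|\le Ce^{(4\rho+\theta+\delta-\nu\lambda)n}$, which you assert but do not derive, and which I do not think is attainable. $E_n$ is a \emph{full-period} quantity: any transfer of the identity $A^{2n}_\alpha(\tilde p_n)=B^{2n}_\alpha(\tilde p_n)$ (where $\tilde p_n:=f^{-n}(p_n)$) over to $z_n$ forces you to conjugate a small error by a cocycle product of length $2n$. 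In your own factorization the middle term is $B^{2n}_\alpha(\tilde p_n)\,\alpha^{2n}(G_n)\,B^{2n}_\alpha(\tilde p_n)^{-1}$ with $\|G_n-\Id\|\le C\delta_n^\nu$, and the best bound the hypotheses give is
\begin{displaymath}
\bigl\|\alpha^{-2n}(B^{2n}_\alpha(\tilde p_n))\bigr\|\,\bigl\|\alpha^{-2n}(B^{2n}_\alpha(\tilde p_n)^{-1})\bigr\|\,\|G_n-\Id\|\le Ce^{2\theta n}\delta_n^{\nu},
\end{displaymath}
followed by at least one application of \eqref{eq: alpha is Lips}: that is two factors of $\theta$, not one, plus several $\rho$'s. (Remark \ref{rem: aux inv hol} cannot give you $e^{(4\rho+2\theta+\delta)k}$ with $k=n$ here either, since the shadowing orbits of $z_n$ and $\tilde p_n$ contract only on the first half of the period and expand on the second.) After you then multiply by $\|\alpha^{-n}(A^n_\alpha(y))\|\,\|\alpha^{-n}(A^n_\alpha(y)^{-1})\|\le Ce^{\theta n}$ for the outer conjugation and by $e^{\rho n}$ for the last $\alpha^{-n}$, the exponent is at least $3\theta+O(\rho)-\nu\lambda$. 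The standing hypothesis is only $5\rho+2\theta<\nu\lambda$, so this is positive when, e.g., $\rho$ is tiny and $\theta$ is close to $\nu\lambda/2$: the budget is blown by exactly one extra factor of $e^{\theta n}$.

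The paper avoids this by never forming the full-period discrepancy. It works with the two \emph{self-normalized half-period ratios} $\alpha^{-n}\bigl(A^n_\alpha(y)^{-1}B^n_\alpha(y)\bigr)$ and $\alpha^{n}\bigl(A^{-n}_\alpha(y)^{-1}B^{-n}_\alpha(y)\bigr)$, whose limits are $H^{s,A}_{xy}H^{s,B}_{yx}$ and $H^{u,A}_{xy}H^{u,B}_{yx}$ with no residual conjugation by $A^n_\alpha(y)$ to pay for. Each is compared with its counterpart at $p_n$ over only $n$ steps (claims \eqref{eq: claim P su} and \eqref{eq: claim P su2}), at a cost of $e^{(5\rho+2\theta+\delta)n}$ against a gain of $e^{-\gamma\nu n}$, which fits the budget; the two halves are then glued by the \emph{exact} identity \eqref{eq: aux 1 P su}, $\alpha^{n}\bigl(A^{-n}_\alpha(p_n)^{-1}B^{-n}_\alpha(p_n)\bigr)=\alpha^{-n}\bigl(A^n_\alpha(p_n)^{-1}B^n_\alpha(p_n)\bigr)$, which carries the information across the period with zero cost. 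That identity is the structural device your proposal is missing. A secondary, more repairable point: your factorization of $B^{2n}_\alpha(z_n)$ through $B^{2n}_\alpha(p_n)$ by holonomies requires passing through the bracket point $[p_n,z_n]$ and verifying that all four correction terms live on local leaves with controlled distances; the paper instead compares the orbits directly by a telescoping sum using Remark \ref{rem: aux inv hol}, which sidesteps that bookkeeping.
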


The following classical result (see for instance \cite[Corollary 6.4.17]{KH95}) will be used in the proof:

\begin{lemma}[Anosov Closing Lemma] Given $\gamma\in (0,\lambda)$ there exist $C>0$ and $\varepsilon _0>0$ such that if $z\in M$ satisfy $d(f^n(z),z)<\varepsilon _0$ then there exists a periodic point $p\in M$ such that $f^n(p)=p$ and
\begin{equation} \label{eq: Anosov CL}
d(f^j(z),f^j(p))\leq C e^{-\gamma \min\lbrace j, n-j\rbrace}d(f^n(z),z)
\end{equation} 
for every $j=0,1,\ldots ,n$.
\end{lemma}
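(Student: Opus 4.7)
The plan is to produce $p$ as the fixed point of a contraction built from the bracket operation $[\cdot,\cdot]$ of the local product structure, and then to extract the exponentially decaying middle-term estimate from the fact that $p$ lies simultaneously on the local stable leaf through $z$ and on the local unstable leaf through $f^n(z)$.

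I would begin by fixing $\gamma\in(0,\lambda)$ and choosing $\varepsilon_0<\tau$ small enough that, for any pair of points at distance less than $\varepsilon_0$, the bracket $[\cdot,\cdot]$ is defined and satisfies a Hölder-type estimate of the form $d(w,[w,w'])+d(w',[w,w'])\le C_1 d(w,w')$ (a consequence of continuity together with $[w,w]=w$), and small enough that the contraction described next has ratio strictly less than $1$. Writing $\delta:=d(f^n(z),z)<\varepsilon_0$, the point $z_0:=[z,f^n(z)]$ already lies in $W^s_\varepsilon(z)\cap W^u_\varepsilon(f^n(z))$ and is within $C_1\delta$ of both $z$ and $f^n(z)$; what it lacks is periodicity.

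Next I would upgrade $z_0$ to a true periodic point $p$ by a bracket iteration on the local product ``rectangle'' $R$ around $z_0$, parametrized in local product coordinates by $R=\{[a,b]:a\in W^s_\eta(z),\,b\in W^u_\eta(f^n(z))\}$ for a small $\eta>0$. Specifically, I would define $\Psi:R\to R$ by $\Psi(q):=[q,f^n(q)]$, viewed through the product coordinates so that the two ingredients actually land in $W^s_\eta(z)$ and $W^u_\eta(f^n(z))$, respectively. The stable contraction of $f^n$ by factor $Ce^{-\lambda n}$ on $W^s$, the unstable contraction of $f^{-n}$ by factor $Ce^{-\lambda n}$ on $W^u$, and the Hölder continuity of the bracket combine to show that $\Psi$ is a contraction of ratio $\le Ce^{-\lambda n}$ on $R$ provided $\eta$ and $\varepsilon_0$ are small. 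The unique fixed point $p$ inherits the memberships defining $R$, so $p\in W^s_\varepsilon(z)\cap W^u_\varepsilon(f^n(z))$ with $d(z,p),\,d(f^n(z),p)\le C_2\delta$, and $p=\Psi(p)=[p,f^n(p)]$ together with expansiveness (which follows from local product structure, Definition \ref{def: hyperbolic homeo}) forces $f^n(p)=p$.

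The middle-term estimate is then immediate from the two memberships. From $p\in W^s_\varepsilon(z)$ and Definition \ref{def: hyperbolic homeo}, $d(f^j(z),f^j(p))\le Ce^{-\lambda j}d(z,p)\le CC_2 e^{-\lambda j}\delta$ for every $0\le j\le n$. From $f^n(p)=p\in W^u_\varepsilon(f^n(z))$, the unstable contraction of $f^{-(n-j)}$ gives $d(f^j(z),f^j(p))=d(f^{-(n-j)}(f^n(z)),f^{-(n-j)}(f^n(p)))\le CC_2 e^{-\lambda(n-j)}\delta$. Taking the minimum of these two bounds and choosing $\gamma<\lambda$ to absorb the multiplicative constants yields $d(f^j(z),f^j(p))\le Ce^{-\gamma\min\{j,n-j\}}\delta$, as claimed. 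The main obstacle is the contraction step: one must verify that $\Psi$ genuinely maps $R$ into itself and contracts, which requires simultaneously controlling the stable and unstable components of the discrepancy $f^n(q)-q$ in product coordinates and using the Hölder continuity of $[\cdot,\cdot]$ to pass back and forth between those coordinates and the ambient metric. This is what pins down the size of $\varepsilon_0$ and $\eta$ in terms of $\lambda$, $C$, and the bracket constant $C_1$, and it is the only non-routine ingredient of the argument.
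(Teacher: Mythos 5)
The paper does not actually prove this lemma: it is quoted as a classical fact with a pointer to \cite[Corollary 6.4.17]{KH95}, so the only question is whether your self-contained argument works, and it does not. The fatal step is the choice of the map $\Psi(q)=[q,f^n(q)]$. With the paper's convention $[x,y]\in W^s_\varepsilon(x)\cap W^u_\varepsilon(y)$, a fixed point of $\Psi$ satisfies only $p\in W^u_\varepsilon(f^n(p))$, a one-sided (backward) closeness condition on the orbits of $p$ and $f^n(p)$; expansiveness needs two-sided closeness, so it cannot force $f^n(p)=p$. Concretely, in the linear model $f(s,u)=(\mu s,\Lambda u)$ with $0<\mu<1<\Lambda$ one has $[x,y]=(y_s,x_u)$, hence $\Psi(q)=\bigl((f^nq)_s,\,q_u\bigr)$: $\Psi$ is the identity in the unstable coordinate, so it is not a contraction (the claimed ratio $Ce^{-\lambda n}$ is false), and its fixed-point set is the entire local unstable manifold of the fixed point, almost none of whose points are periodic. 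Two further problems: for $q$ in a rectangle of size $\eta$ independent of $n$, the bracket $[q,f^n(q)]$ need not even be defined, since $f^n(q)$ is typically $\tau$-far from $q$ when $n$ is large; and the set $R=\{[a,b]:a\in W^s_\eta(z),\ b\in W^u_\eta(f^n(z))\}$ is not a rectangle at all but essentially the single point $[z,f^n(z)]$, because every such $[a,b]$ lies in $W^s_{\varepsilon+\eta}(z)\cap W^u_{\varepsilon+\eta}(f^n(z))$ --- the bracket order is reversed from what you intended.

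The same confusion undermines the final estimate: you use simultaneously $p\in W^s_\varepsilon(z)$ and $p\in W^u_\varepsilon(f^n(z))$, but by uniqueness of the bracket these two memberships force $p=[z,f^n(z)]$, which is generically not periodic (in the linear model $[z,f^nz]=(\mu^nz_s,z_u)$, whereas the periodic point shadowing the segment is the origin, which lies on neither leaf unless $z_s=0$ or $z_u=0$). In the genuine argument the periodic point lies on neither local leaf of $z$ nor of $f^n(z)$; the $e^{-\gamma\min\{j,n-j\}}$ decay comes from splitting $d(f^j(z),f^j(p))$ into stable and unstable components along the orbit, or equivalently from shadowing the periodic pseudo-orbit obtained by repeating the segment $z,\dots,f^{n-1}(z)$, with expansiveness supplying periodicity and uniqueness. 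If you want to keep a bracket/fixed-point scheme, the map must contract both coordinates, e.g.\ $q\mapsto[f^{-n}(q),f^{n}(q)]$, whose fixed point satisfies $d(f^j(p),f^j(f^n(p)))\le\varepsilon$ for all $j\in\mathbb{Z}$ and hence is periodic by expansiveness; but then the step you dismissed as routine --- showing such a map is defined on and preserves a suitable domain --- is exactly the orbit-tracking work that constitutes the proof, and it cannot be carried out on a rectangle of size independent of $n$. The economical course is to do what the paper does and simply cite \cite[Corollary 6.4.17]{KH95}.
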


\begin{proof}[Proof of Lemma \ref{lemma: P su holon}]

Let $\delta>0 $ be so that $5\rho+2\theta+\delta<\lambda \nu$ and $\gamma \in (0,\lambda)$ such that $5\rho+2\theta+\delta<\gamma \nu$. Let $C>0$ and $\varepsilon _0>0$ be given by the Anosov Closing Lemma associated to $\gamma$.

Fix an arbitrary point $y\in W(x)$. We begin by noticing that, as $y\in W(x)$, there exist $C>0$ and $n_0 \in \mathbb{N}$ such that for all $n\geq n_0$ we have
\begin{center}
$d(f^{-n}(y),f^n(y))\leq Ce^{-\lambda(n-n_0)}$.
\end{center}
In fact, this follows from the fact that, as $y\in W(x)=W^s(x)\cap W^u(x)$, there exists $n_0\in \mathbb{N}$ such that $f^{n_0}(y)\in W^s_{\varepsilon }(x)$ and $f^{-n_0}(y)\in W^u_{\varepsilon }(x)$ and the exponential convergence towards $x$ in $W^s_{\varepsilon}(x)$ and $W^u_{\varepsilon}(x)$. 

Let $n_1\geq n_0$ be such that, for all $n\geq n_1$, $d(f^n(y), f^{-n}(y))<\varepsilon _0$. Thus, by the Anosov Closing Lemma, for every $n\geq n_1$ there exists a periodic point $p_n\in M$ with $f^{2n}(p_n)=p_n$ such that
\begin{displaymath}
	d(f^j(f^{-n}(p_n)), f^j(f^{-n}(y))\leq C e^{-\gamma\min \lbrace j, 2n-j\rbrace }d(f^{-n}(y),f^n(y))
\end{displaymath}
for every $j=0,1,\ldots ,2n$. Using the periodic orbit condition \eqref{eq: periodic orbit cond main teo} and noticing that $f^{2n}(f^{-n}(p_n))=f^{-n}(p_n)$, we get
\begin{displaymath}
A_\alpha^{2n}(f^{-n}(p_n))=B_\alpha^{2n}(f^{-n}(p_n)),
\end{displaymath}
which can be rewritten as 
\begin{displaymath}
A_\alpha^n(p_n)\alpha^n\left(A_\alpha^n(f^{-n}(p_n))\right)=B_\alpha^n(p_n)\alpha^n\left(B_\alpha^n(f^{-n}(p_n))\right),
\end{displaymath}
or, equivalently, as
\begin{displaymath}
\alpha^n\left(A_\alpha^n(f^{-n}(p_n))B_\alpha^n(f^{-n}(p_n))^{-1}\right)=A_\alpha^n(p_n)^{-1}B_\alpha^n(p_n).
\end{displaymath}
Thus, observing that 
\begin{displaymath}
	A^n_\alpha(f^{-n}(p_n))=\alpha^n\left(A^{-n}_\alpha(p_n)^{-1}\right)
\end{displaymath}
and similarly
\begin{displaymath}
	B^n_\alpha(f^{-n}(p_n))^{-1}=\alpha^n\left(B^{-n}_\alpha(p_n)\right)
\end{displaymath}
we get 
\begin{equation}\label{eq: aux 1 P su}
\alpha ^n\left(A_\alpha^{-n}(p_n)^{-1}B_\alpha^{-n}(p_n)\right)= \alpha^{-n} \left(A_\alpha^n(p_n)^{-1}B_\alpha^n(p_n)\right).
\end{equation}
Now we claim that 
\begin{equation}\label{eq: claim P su}
\| \alpha^{-n} \left(A_\alpha^n(y)^{-1}B_\alpha^n(y)\right)-\alpha^{-n} \left(A_\alpha^n(p_n)^{-1}B_\alpha^n(p_n)\right)\|\xrightarrow{n\to +\infty}0
\end{equation}
and 
\begin{equation}\label{eq: claim P su2}
\| \alpha ^n\left(A_\alpha^{-n}(y)^{-1}B_\alpha^{-n}(y)\right) - \alpha ^n\left(A_\alpha^{-n}(p_n)^{-1}B_\alpha^{-n}(p_n)\right)\|\xrightarrow{n\to +\infty}0.
\end{equation}
Consequently, it follows from \eqref{eq: aux 1 P su} and our claim that 
\begin{displaymath}
\| \alpha^{-n} \left(A_\alpha^n(y)^{-1}B_\alpha^n(y)\right)- \alpha ^n\left(A_\alpha^{-n}(y)^{-1}B_\alpha^{-n}(y)\right)\|\xrightarrow{n\to +\infty}0.
\end{displaymath}
Thus, observing that
\begin{displaymath}
\begin{split}
\alpha^{-n} \left(A_\alpha^n(y)^{-1}B_\alpha^n(y)\right)&=\alpha^{-n} \left(A_\alpha^n(y)^{-1}A_\alpha^n(x)B_\alpha^n(x)^{-1}B_\alpha^n(y)\right)\xrightarrow{n\to +\infty} H^{s,A}_{xy}H^{s,B}_{yx}
\end{split}
\end{displaymath}
and similarly
\begin{displaymath}
	\alpha ^n\left(A_\alpha^{-n}(y)^{-1}B_\alpha^{-n}(y)\right) \xrightarrow{n\to +\infty} H^{u,A}_{xy}H^{u,B}_{yx}
\end{displaymath}
we conclude that
\begin{displaymath}
P(y)=H^{s,A}_{xy}H^{s,B}_{yx}=H^{u,A}_{xy}H^{u,B}_{yx}
\end{displaymath}	
as we wanted.

So, in order to complete the proof, it remains to prove our claim. We shall only prove \eqref{eq: claim P su} since \eqref{eq: claim P su2} is completely analogous.

We start observing that
\begin{displaymath}
\|\alpha^{-n}\left(A_\alpha^{n}(y)A_\alpha^{n}(p_n)^{-1}\right)-\Id\|
\end{displaymath}
is smaller than or equal to
\begin{multline*}
 \sum ^{n-1}_{j=0} \|\alpha^{-(n-j)} \left( A_\alpha^{n-j}(f^j(y))A_\alpha^{n-j}(f^j(p_n))^{-1}\right) \\
 -\alpha^{-(n-j)} \left(A_\alpha^{n-j-1}(f^{j+1}(y))A_\alpha^{n-j-1}(f^{j+1}(p_n))^{-1}\right)\|
\end{multline*}
which by the cocycle property \eqref{eq: twisted cocycle eq intro} is equal to
\begin{multline*}
\sum ^{n-1}_{j=0}\| \alpha^{-(n-j)} \left(A_\alpha^{n-j-1}(f^{j+1}(y))\right) \alpha^{-1}\left(A(f^j(y))A(f^j(p_n))^{-1}\right) \alpha^{-(n-j)}\left(A_\alpha^{n-j-1}(f^{j+1}(p_n))^{-1}\right) \\
- \alpha^{-(n-j)} \left(A_\alpha^{n-j-1}(f^{j+1}(y))A_\alpha^{n-j-1}(f^{j+1}(p_n))^{-1}\right)\| .
\end{multline*}
By the property of the norm this last quantity is smaller than or equal to
\begin{multline*}
\sum ^{n-1}_{j=0}\| \alpha^{-(n-j)} \left(A_\alpha^{n-j-1}(f^{j+1}(y))\right)\| \|\alpha^{-(n-j)}\left(A_\alpha^{n-j-1}(f^{j+1}(p_n))^{-1}\right)\|\\
 \cdot  \allowbreak \|\alpha^{-1}\left(A(f^j(y))A(f^j(p_n))^{-1}\right)-\Id\|
\end{multline*}
which in its turn is smaller than or equal to
\begin{multline*}
\sum ^{n-1}_{j=0}C^2e^{2\rho} \|\alpha^{-(n-j-1)} \left(A_\alpha^{n-j-1}(f^{j+1}(y))\right)\| \|\alpha^{-(n-j-1)}\left(A_\alpha^{n-j-1}(f^{j+1}(p_n))^{-1}\right)\|  \\
\cdot\|A(f^j(y))A(f^j(p_n))^{-1}-\Id\|.
\end{multline*}
Now, using Remark \ref{rem: aux inv hol}, the fact that $A$ is $\nu$-H\"older continuous and property \eqref{eq: Anosov CL} given by the Anosov Closing Lemma, it follows that the previous quantity is smaller than or equal to 
\begin{displaymath}
\sum ^{n-1}_{j=0}C^2e^{2\rho} Ce^{(4\rho+2\theta+\delta)(n-j-1)} Ce^{-\gamma \nu (n-j-1)} d(f^{-n}(y),f^{n}(y))^\nu.
\end{displaymath}
 Recalling that $d(f^{-n}(y),f^{n}(y))\leq e^{-\lambda(n-n_0)}$ for every $n\geq n_0$ and $5\rho+2\theta+\delta<\gamma \nu $ it follows that
\begin{equation}\label{eq: aux 2 P su}
\|\alpha^{-n}\left(A_\alpha^{n}(y)A_\alpha^{n}(p_n)^{-1}\right)-\Id\| \leq Ce^{-\lambda \nu (n-n_0)}
\end{equation}
for every $n\geq n_0$ for some constant $C>0$ independent of $n$ and $p_n$. Similarly, we can prove that 
\begin{equation}\label{eq: aux 3 P su}
\|\alpha^{-n}\left(B_\alpha^{n}(p_n)A_\alpha^{n}(y)^{-1}\right)-\Id\| \leq Ce^{-\lambda \nu (n-n_0)}
\end{equation}
for every $n\geq n_0$.

Now, using that there exists $N>0$ so that $\|\alpha^{-n} \left(A_\alpha^n(y)^{-1}A_\alpha^n(x)\right)\|<N$ and $\|\alpha^{-n} \left(B_\alpha^n(x)^{-1}B_\alpha^n(y)\right)\|<N$ for every sufficiently large $n$ since these two quantities converge to $H^{s,A}_{xy}$ and $H^{s,B}_{yx}$, respectively; $\|\alpha^{-n}\left(A_\alpha^{n}(y)A_\alpha^{n}(p_n)^{-1}\right)\|<N$ for every sufficiently large $n$ by \eqref{eq: aux 2 P su}; $A^n_\alpha(x)=B^n_\alpha(x)$ since $A_\alpha$ and $B_\alpha$ satisfy periodic orbit condition \eqref{eq: periodic orbit cond main teo} and $f(x)=x$; and using \eqref{eq: fb}, \eqref{eq: aux 2 P su} and \eqref{eq: aux 3 P su}, we get
\begin{displaymath}
\begin{split}
&\| \alpha^{-n} \left(A_\alpha^n(y)^{-1}B_\alpha^n(y)\right)-\alpha^{-n} \left(A_\alpha^n(p_n)^{-1}B_\alpha^n(p_n)\right)\|\\
&=\| \alpha^{-n} \left(A_\alpha^n(y)^{-1}A_\alpha^n(x)\right)\alpha^{-n} \left(B_\alpha^n(x)^{-1}B_\alpha^n(y)\right)-\alpha^{-n} \left(A_\alpha^n(p_n)^{-1}B_\alpha^n(p_n)\right)\| \\
&\leq N^2 \|\Id -\alpha^{-n} \left(A_\alpha^n(x)^{-1}A_\alpha^n(y)A_\alpha^n(p_n)^{-1}B_\alpha^n(p_n)B_\alpha^n(y)^{-1}B_\alpha^n(x)\right)\| \\
&\leq N^2 \|\alpha^{-n} \left(A_\alpha^n(x)^{-1}\right)\|\|\alpha^{-n} \left(A_\alpha^n(x)\right)\| \|\Id -\alpha^{-n} \left(A_\alpha^n(y)A_\alpha^n(p_n)^{-1}B_\alpha^n(p_n)B_\alpha^n(y)^{-1}\right)\| \\
&\leq  N^2 Ce^{\theta n} \Big(\|\alpha^{-n} \left(A_\alpha^n(y)A_\alpha^n(p_n)^{-1}\right)\|\| \alpha^{-n} \left(B_\alpha^n(p_n)B_\alpha^n(y)^{-1}\right)-\Id\| \\
& +\|\alpha^{-n} \left(A_\alpha^n(y)A_\alpha^n(p_n)^{-1}\right)-\Id\| \Big) \\
&\leq N^2 Ce^{\theta n} \left(NCe^{-\lambda \nu (n-n_0)}+Ce^{-\lambda \nu (n-n_0)}\right)\\
&\leq \tilde{C}e^{(\theta-\lambda)n} 
\end{split}
\end{displaymath}
for some $\tilde{C}>0$ independent of $n$ and $p_n$ and $n\gg0$. In particular,
\begin{displaymath}
\| \alpha^{-n} \left(A_\alpha^n(y)^{-1}B_\alpha^n(y)\right)-\alpha^{-n} \left(A_\alpha^n(p_n)^{-1}B_\alpha^n(p_n)\right)\| \xrightarrow{n\to +\infty}0
\end{displaymath}
proving \eqref{eq: claim P su} and thus completing the proof Lemma \ref{lemma: P su holon}.
\end{proof}

\begin{lemma}\label{lemma: P is Lipschitz}
$P$ is $\nu$-H\"older continuous on $W(x)$.
\end{lemma}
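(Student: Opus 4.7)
The plan is to follow the strategy of \cite[Lemma 4]{Bac15}, adapting it to the twisted setting by combining the local product structure of $f$ with the two equivalent expressions for $P$ furnished by Lemma \ref{lemma: P su holon}.

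First I would fix $y,z\in W(x)$ with $d(y,z)<\tau$, where $\tau$ is the constant from Definition \ref{def: hyperbolic homeo}, and introduce the intermediate point $w:=[y,z]\in W^s_\varepsilon(y)\cap W^u_\varepsilon(z)$; continuity of the bracket operation ensures $d(y,w),d(z,w)\leq K\,d(y,z)$ for some constant $K>0$ independent of $y$ and $z$. Since $x,y,w$ all lie on the stable leaf $W^s(x)$, the composition property of holonomies from Proposition \ref{prop: holonomies existence} gives
\begin{displaymath}
P(w)=H^{s,A}_{xw}H^{s,B}_{wx}=H^{s,A}_{yw}\bigl(H^{s,A}_{xy}H^{s,B}_{yx}\bigr)H^{s,B}_{wy}=H^{s,A}_{yw}P(y)H^{s,B}_{wy},
\end{displaymath}
whereas the unstable representation of $P(w)$ provided by Lemma \ref{lemma: P su holon}, together with the fact that $x,z,w$ share a common unstable leaf, yields
\begin{displaymath}
P(w)=H^{u,A}_{zw}P(z)H^{u,B}_{wz}.
\end{displaymath}

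Equating these two expressions produces the factorization $P(y)=L_1P(z)L_2$ with $L_1:=(H^{s,A}_{yw})^{-1}H^{u,A}_{zw}$ and $L_2:=H^{u,B}_{wz}(H^{s,B}_{wy})^{-1}$. The H\"older bound \eqref{eq: holonom are holder} applied to each of the four holonomy factors (combined with $d(y,w)^\nu,d(z,w)^\nu\leq K^\nu d(y,z)^\nu$) gives $\|L_i-\Id\|\leq C\,d(y,z)^\nu$ and $\|L_i\|\leq 1+C\,d(y,z)^\nu$. Expanding
\begin{displaymath}
P(y)-P(z)=(L_1-\Id)P(z)L_2+P(z)(L_2-\Id)
\end{displaymath}
then yields the local H\"older estimate $\|P(y)-P(z)\|\leq C'\,d(y,z)^\nu\,\|P(z)\|$.

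The remaining and most delicate step is to promote this to a uniform estimate by establishing that $\|P\|$ is bounded on $W(x)$. Using $f(x)=x$ together with the periodic orbit condition \eqref{eq: periodic orbit cond main teo}, which gives $A_\alpha^n(x)=B_\alpha^n(x)$ for every $n$, the definition of $P$ collapses to the single formula
\begin{displaymath}
P(y)=\lim_{n\to\infty}\alpha^{-n}\bigl(A_\alpha^n(y)^{-1}B_\alpha^n(y)\bigr);
\end{displaymath}
a telescoping argument in the spirit of Proposition \ref{prop: holonomies existence}, making use of the fiber-bunching of both $A_\alpha$ and $B_\alpha$ to control the mixed twisted product $\|\alpha^{-n}(A_\alpha^n(y)^{-1})\|\cdot\|\alpha^{-n}(B_\alpha^n(y))\|$, should show this limit is uniformly convergent (and hence uniformly bounded) on $M$. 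The hard part will be precisely this uniform control in the presence of the twist: an extra factor $e^{\rho n}$ from \eqref{eq: alpha is Lips} is introduced by every application of $\alpha^{\pm n}$ and must be absorbed by the hyperbolic contraction $e^{-\nu\lambda n}$ coming from $f$, a cancellation that is guaranteed only by the fiber-bunching inequality $5\rho+2\theta<\nu\lambda$.
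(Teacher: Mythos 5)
Your main argument is exactly the one the paper intends (its proof defers to \cite[Lemma 4]{Bac15}): connect $y,z\in W(x)$ through $w=[y,z]$, represent $P(w)$ via stable holonomies from $y$ and, using Lemma \ref{lemma: P su holon}, via unstable holonomies from $z$, and deduce $P(y)=L_1P(z)L_2$ with $\|L_i-\Id\|\leq C\,d(y,z)^\nu$. This part is correct, and you rightly observe that it only yields $\|P(y)-P(z)\|\leq C\|P(z)\|\,d(y,z)^\nu$, so a uniform bound on $\|P\|$ over $W(x)$ is still required. (A minor caveat: mere continuity of the bracket does not give $d(y,w)\leq K\,d(y,z)$; this Lipschitz-type bound is an additional standard property of the local product structure, valid in the examples of Section \ref{sec: hyperbolic homeo} and used implicitly in \cite{Bac15,Sad15} as well.)

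The gap is in your route to that uniform bound. First, fiber-bunching does not control the mixed product $\|\alpha^{-n}(A_\alpha^n(y)^{-1})\|\cdot\|\alpha^{-n}(B_\alpha^n(y))\|$: condition \eqref{eq: fb} bounds the product of the norms of a matrix and \emph{its own} inverse (a distortion bound), not the norms themselves, and for two different fiber-bunched cocycles this mixed product can grow exponentially fast (take $A=\tfrac12\Id$, $B=\Id$, $\alpha=\Id$). What actually makes $\alpha^{-n}\bigl(A_\alpha^n(y)^{-1}B_\alpha^n(y)\bigr)$ converge is the insertion of $A_\alpha^n(x)=B_\alpha^n(x)$, splitting it into the two single-cocycle products converging to $H^{s,A}_{xy}$ and $H^{s,B}_{yx}$. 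Second, even with that splitting the convergence is uniform only for $y$ in the \emph{local} leaf $W^s_\varepsilon(x)$; a general $y\in W(x)$ must first be iterated $n_0(y)$ times to enter $W^s_\varepsilon(x)$, and the resulting bound on $\|H^{s,A}_{xy}\|$ involves $A_\alpha^{n_0(y)}$ and is not uniform over $W(x)$, so ``uniformly convergent on $M$'' is not available. The correct (and much shorter) way to get $\sup_{W(x)}\|P\|<\infty$ is to reuse the local estimate you already proved: cover $M$ by finitely many balls of radius $\tau/2$, pick a reference point $z_i\in W(x)$ in each ball meeting $W(x)$, and note that every $y\in W(x)$ lies within $\tau$ of some $z_i$, whence $\|P(y)\|\leq\|L_1\|\,\|P(z_i)\|\,\|L_2\|\leq C\max_i\|P(z_i)\|$. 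With this bound your local estimate becomes uniformly H\"older on $\tau$-balls, and pairs with $d(y,z)\geq\tau$ are handled by $\|P(y)-P(z)\|\leq 2\sup\|P\|\leq 2\tau^{-\nu}\sup\|P\|\,d(y,z)^\nu$, which is the compactness step the paper alludes to.
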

\begin{proof}
The proof of this fact is analogous to the proof of \cite[Lemma 4]{Bac15} and so we just summarize the idea beyond it. Full details can be checked in the original work. 

From Lemma \ref{lemma: P su holon} we know that $P$ can be defined using both stable and unstable holonomies. By property \eqref{eq: holonom are holder} we get that restricted to local stable or unstable manifolds, $P$ is H\"older continuous with an uniform H\"older constant. Now, since $f$ has local product structure, points that are $\tau$-close (where $\tau$ is as in Definition \ref{def: hyperbolic homeo}) can be connected via local stable and unstable manifolds. Putting all these facts together we conclude that $P$ is H\"older continuous on balls of radius $\tau$. Finally, using the compactness of $M$ we conclude that $P$ is H\"older continuous in $W(x)$.
\end{proof}

Therefore, we can extend $P: W(x)\to GL(d,\mathbb{R}))$ to the closure of $W(x)$ that is the whole space $M$. By continuity, such extension clearly satisfies the cohomological equation \eqref{eq: cohomo eq main teo} completing the proof of Theorem \ref{teo: main} in the case when $f$ exhibits a fixed point. 

Now, following the argument given in Section 5 of \cite{Bac15}, mutatis mutandis, we eliminate the additional assumption about the existence of a fixed point for $f$ and conclude the proof of Theorem \ref{teo: main}.


\medskip{\bf Acknowledgements.} Thanks to the referee for the constructive comments on the first version of the paper. The author was partially supported by a CNPq-Brazil PQ fellowship under Grant No. 306484/2018-8.


\end{document}